\newtheorem{theorem}{Theorem}
\newtheorem{corollary}[theorem]{Corollary}
\newtheorem{proposition}[theorem]{Proposition}
\theoremstyle{remark}
\newtheorem{remark}[theorem]{Remark}
\newcommand{\Pcal}{\mathcal{P}}
\newcommand{\Qcal}{\mathcal{Q}}
\newcommand{\Ppriority}{\mathcal{P}_\textsf{priority}}
\newcommand{\Pconflict}{\mathcal{P}_\textsf{conflict}}
\newcommand{\Qsync}{\mathcal{Q}_\textsf{sync}}
\newcommand{\cadlag}{{c\`adl\`ag}}
\newcommand{\out}{^{\textrm{out}}}
\newcommand{\inc}{^{\textrm{in}}}
\newcommand{\Cpim}{C^-_{\pi}}
\newcommand{\Cpistar}{C^-_{\pi^*}}
\newcommand{\transpose}{^\mathsf{T}}
\newcommand{\R}{\mathbb{R}}
\newcommand{\Rplus}{\mathbb{R}_{\geq 0}}
\newcommand{\Rsplus}{\mathbb{R}_{> 0}}
\newcommand{\N}{\mathbb{N}}
\newcommand{\ie}{\textit{i.e.}}
\begin{document}

\title[Stationary solutions of discrete and continuous PN with priorities]{Stationary solutions of discrete and continuous Petri nets with priorities}
\thanks{The three authors were partially supported by the programme ``Concepts, Syst{\`e}mes et Outils pour la S\'ecurit\'e Globale'' of the French National Agency of Research (ANR), project ``DEMOCRITE'', number ANR-13-SECU-0007-01, and by the PGMO program of EDF and Fondation Math\'ematique Jacques Hadamard.
The first and last authors were partially supported by the programme ``Ing\'enierie Num\'erique \& S\'ecurit\'e'' of ANR, project ``MALTHY'', number ANR-13-INSE-0003.}
%\thanks{A version of this article was selected for publication in a special issue of Performance evaluation journal, to appear.}

\author[X.Allamigeon]{Xavier Allamigeon}
\author[V.B\oe{}uf]{Vianney B\oe{}uf}
\author[S.Gaubert]{St\'ephane Gaubert}
\address[X.~Allamigeon, S.~Gaubert]{INRIA and CMAP, \'Ecole polytechnique, CNRS, Universit\'e de Paris-Saclay}
\address[V.~B\oe{}uf]{\'Ecole des Ponts ParisTech \and INRIA and CMAP, \'Ecole polytechnique, CNRS, Universit\'e de Paris-Saclay \and Brigade de sapeurs-pompiers de Paris}
\email{FirstName.LastName@inria.fr}

\date{\today}

\begin{abstract}
We study a continuous dynamics for a class of Petri nets which allows
the routing at non-free choice places to be determined by priorities rules.
We show that this dynamics can be written in terms of policies which identify the bottleneck places. 
We characterize the stationary solutions, and show that they coincide with the stationary solutions of the discrete dynamics of this class of Petri nets. 
We provide numerical experiments on a case study of an emergency call center, indicating that pathologies of discrete models 
(oscillations around a limit different from the stationary limit) 
vanish by passing to continuous Petri nets.
\end{abstract}

\keywords{Continuous Petri nets; timed discrete event systems}
\maketitle

\section{Introduction}
\subsection*{Context}
The study of continuous analogues of Petri nets dates back to the works of David and Alla~\cite{david1987continuous} and Silva and Colom~\cite{silva1987structural} in 1987. It has given rise to a large scope of research in the field of Petri nets.
Whereas classical (discrete) Petri nets belong to the class of discrete event dynamic systems,
the circulation of tokens in continuous Petri nets is a continuous phenomenon: 
tokens are assumed to be fluid, \ie, a transition can fire an infinitesimal quantity of tokens. In this way, the continuous dynamics can be represented by a system of ordinary differential equations or differential inclusions.

Continuous Petri nets are usually introduced as a relaxed approximation of Petri nets, that helps understanding some of the properties of the underlying discrete model, 
allowing one to overcome the state space explosion that can occur in the latter.
The continuous framework can also be seen as a scaling limit of a class of stochastic Petri nets (see~\cite{darling2008diff}), 
where the marking $M_p$ of place $p$ in the fluid model is the finite limit of $M_p(N)/N$, with $N$ being a scaling ratio tending to infinity, and where the firing times of transitions follow a Poisson distribution.

An important effort has been devoted to the comparison between continuous nets and their discrete counterparts.
For example, the relationship between reachability of continuous Petri nets and  of discrete Petri nets is well understood (see~\cite{recalde1999autonomous}).
A recent introduction to continuous models can be found in \cite{vazquez2013introduction}, while a more extensive reference is \cite{david2010discrete}.

In order to evaluate the long-term performance of Petri nets,
one has to characterize the stationary or steady states of the Petri nets dynamics. 
Cohen, Gaubert and Quadrat~\cite{cohen1995asymptotic} introduced an approximation of a discrete Petri net by a fluid, piecewise affine dynamics with finite delays, and showed that the limit throughput does exist  
for a class of consistent and free choice Petri nets.
In the more recent work of Gaujal and Giua~\cite{gaujal2004optimal}, the result is extended to larger classes of Petri nets, and the stationary throughputs are computed as the solutions of a linear program. The results obtained using this fluid approximation hardly apply to the discrete model,
up to a remarkable exception identified by Bouillard, Gaujal and Mairesse~\cite{bouillard2006extremal} (bounded Petri nets under total allocation).
This reference illustrates the many difficulties that arise from the discrete setting (\emph{e.g.}, some firing sequences may lead to a deadlock).

In the continuous dynamics setting, with time attached to transitions, 
Recalde and Silva~\cite{recalde2000pn} showed that 
the steady states of free choice Petri nets as well as
upper bounds of the throughputs in larger classes of Petri nets can be determined by linear programming.
However, in general, the asymptotic throughputs are non-monotone with respect to the initial marking or the firing rates of the transitions~\cite{mahulea2006performance}.
An example of oscillations in infinite time around a steady state is also given in~\cite{mahulea2008steady}. 

\subsection*{Contributions}
We propose a continuous dynamics of Petri nets where time is attached to places and not to transitions. The main novelty is that it handles a class of Petri nets in which tokens can be routed according to priority rules (Section~\ref{sec:semantics}).
We initially studied this class in~\cite{allamigeon2015performance} in the discrete setting, motivated by an application to the performance analysis of an emergency call center.

We show that the continuous dynamics can equivalently be expressed in terms of {\em policies}. 
A policy is a map associating with every transition one of its upstream places.
In this way, the dynamics of the Petri net can be written as an infimum of the dynamics of subnets induced by the different policies. 
The policies reaching the infimum indicate the places which are bottleneck in the Petri net. 
On any time interval in which a fixed policy reaches the infimum, the dynamics reduces to a linear dynamics (Section~\ref{sec:policies}).

We characterize the stationary solutions in terms of the policies of the Petri net.
This allows us to set up a correspondence between the (ultimately affine) stationary solutions 
of the discrete dynamics that were described in~\cite{allamigeon2015performance} and the stationary solutions of the continuous dynamics (Section~\ref{sec:4}). 
We also relate the continuous stationary solutions to the initial marking of the Petri net.
This relies on restrictive assumptions, in particular the semi-simplicity of a 0 eigenvalue of a matrix associated with a policy.

We finally provide some numerical simulations of the continuous dynamics. 
We consider a model of emergency call center with two hierarchical levels for handling calls,
originating from a real case study (17-18-112 call center in the Paris area)~\cite{allamigeon2015performance}.
On this Petri net,
numerical experiments illustrate the convergence of the trajectory towards the stationary solution.
This exhibits an advantage of the continuous setting in comparison to the discrete one, 
in which, for certain values of the parameters, the asymptotic throughputs computed by simulations differ from the stationary solutions (Section~\ref{sec:numerical experiments}).

\subsection*{Related work}
The motivation of this work stems from our previous study~\cite{allamigeon2015performance}, in which we addressed the same class of Petri nets with priorities in the discrete setting, 
and applied it to the performance analysis of an emergency call center.
The discrete dynamics is shown there to be given by piecewise affine equations (tropical analogues of rational equations).
The idea of modeling priority rules by piecewise affine dynamics
originated from Farhi, Goursat and Quadrat~\cite{farhi}, who applied it to a special class of road traffic models.
In the discrete setting, limit time-periodic behaviors can occur. They may lead to asymptotic throughputs different from the affine stationary solutions of the dynamics,
a pathology which motivates our study of a continuous version of the dynamics.

The ``continuization'' of our dynamics draws inspiration from the original continuous model where time is attached to transitions.
In particular, the situation in which the routing of a token at a given place is influenced by the firing times of the output transitions through a race policy has received much attention, see~\cite{vazquez2013introduction}.
Here, we address the situation in which
the routing is specified by priority or preselection rules
which are independent of the processing rates. To do so, it
is convenient to attach times to places, instead of attaching firing
rates to transitions. We point out in Remark~\ref{rk:comparison}  that our model can be reduced to a variant of the standard continuous model~\cite{vazquez2013introduction} in which we allow immediate transitions and require
non-trivial routings to occur only at these transitions.  
A benefit of our presentation is to allow a more transparent
comparison between the continuous model and
the discrete time piecewise affine models studied in~\cite{cohen1995asymptotic,gaujal2004optimal,allamigeon2015performance}. 

The use of the term ``policy'' refers to the theory of Markov decision processes, owing to the analogy between the discrete time dynamics and the value function of a semi-Markovian decision process. 
Note that in the context of continuous Petri nets, policies are also known as ``configurations'', see~\cite{mahulea2006performance} for an example.

\section{Continuous dynamics of Petri nets}
\label{sec:semantics}

\subsection{General notation} \label{sec:notations}
A Petri net consists of a set $\Pcal$ of places, a set $\Qcal$ of transitions and a set of arcs $\mathcal{E} \subset (\Pcal \times \Qcal) \cup (\Qcal \times \Pcal)$. 
Every arc is given a valuation in $\N$.
Each place $p \in \Pcal$ is given an initial marking $M^0_p \in \N$, which represents the number of tokens  initially occurring in the place. 

We denote by $a^+_{qp}$ the valuation of the arc from transition $q$ to place $p$, with the convention that $a^+_{qp}=0$ if there is no such arc. Similarly, 
we denote by $a^-_{qp}$ the valuation of the arc from place $p$ to transition $q$, with the same convention. We set $a_{qp} := a^+_{qp} - a^-_{qp}$.
The $\Qcal\times \Pcal$ matrix $A = (a_{qp})_{q \in \Qcal, p \in \Pcal}$ is 
referred to as the \emph{incidence matrix} of the Petri net,
and its transpose matrix $C := A\transpose$ as its \emph{token flow matrix}.
We also denote by $C^+$ (resp.\ $C^-$) the $\Pcal \times \Qcal$ matrix with entry $a_{qp}^+$ (resp.\ $a_{qp}^-$), so that $C = C^+ - C^-$.
We limit our attention to \emph{pure} Petri nets, \ie, Petri nets with no self-loop: for every pair $(q,p)$, at least one of $a^+_{qp}$ and $a^-_{qp}$ is zero.

We denote by $q\inc$ the set of upstream places of transition $q$ and by $q\out$ the set of downstream places of transition $q$. Similarly, we use the notation $p\inc$ and $p\out$ to refer to the sets of input and output transitions of a place $p$.

\subsection{Petri nets with free choice and priority routing}

In this paper, we consider a class of Petri nets in which places are either free choice or subject to priority. Recall that a place $p\in \Pcal$ 
is said to be \emph{free choice}
if either all the output transitions $q \in p\out$ satisfy $q\inc = \{p\}$ (\emph{conflict}, see Figure~\ref{fig:configuration}(a)), or  $|p\out| = 1$ (\emph{synchronization}, see Figure~\ref{fig:configuration}(b)). A place is \emph{subject to priority} if its tokens are routed to output transitions according to a priority rule. We refer to Figure~\ref{fig:configuration}(c) for an illustration. For the sake of simplicity, we assume that each place subject to priority has exactly two output transitions, and that any transition has at most one upstream place subject to priority. Given a place $p$ subject to priority, we denote by $q^+(p)$ and $q^-(p)$ its two output transitions, with the convention that $q^+(p)$ has priority over $q^-(p)$. For the sake of readability, we use the notation $q^+$ and $q^-$ when the place $p$ is clear from context.

The set of transitions such that every upstream place $p$ satisfies $|p\out| = 1$ is referred to as $\Qsync$ and the set of free choice places that have at least two output transitions is referred to as $\Pconflict$. 
We denote by $\Ppriority$ the set of places subject to priority.
The sets $(\Pconflict)\out$, $\Qsync$ and $(\Ppriority)\out$ form a partition of $\Qcal$.
Figure~\ref{fig:configuration} hence summarizes the three possible place/transition patterns that can occur in this class of Petri nets.
\begin{figure}
\begin{center}
\includegraphics{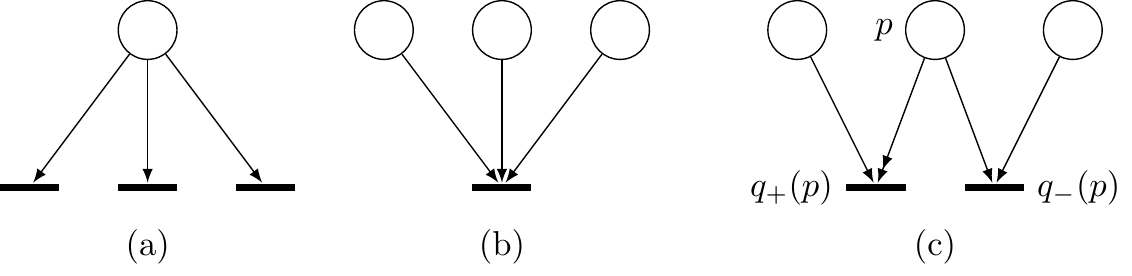}
\caption{Conflict, synchronization and priority patterns}\label{fig:configuration}
\end{center}
\end{figure}

\subsection{Continuous dynamics and routing rules}

We now equip the Petri net with a continuous semantics. Given a transition $q$, we associate a flow $f_q(t)$ which represents
the instantaneous firing rate of transition $q$ at time $t$.
We also associate with each place $p$ a {\em marking} $M_p(t)$, which is
a continuous real valued function of the time $t$.
In the case of {\em discrete} timed Petri nets,
one typically requires that every token stays a minimum time in the place,
--- at this stage, the token may be considered as {\em under processing} ---
before becoming available for the firing of output transitions. 
To capture this property in the continuous setting,
we assume that the marking $M_p(t)$ can be decomposed
as $M_p(t)=m_p(t)+w_p(t)$, where $m_p(t)$ is the quantity
of tokens under processing and $w_p(t)$ is the quantity of tokens
waiting to contribute to the firing of an output transition.

We associate with each place $p$ a time constant $\tau_p>0$.
 Each token entering in a place is processed 
with the rate $1/\tau_p$.
This leads to the following differential equation:
\begin{equation} \label{eq:mp}
  \dot{m}_p(t) = \sum_{q \in p\inc}a_{qp}^+f_q(t) - \dfrac{m_p(t)}{\tau_p} \, .
\end{equation}
The evolution of the number of tokens waiting in place $p$ is 
described by the relation:
\begin{equation} \label{eq:wp}
  \dot{w}_p(t) = \dfrac{m_p(t)}{\tau_p} - \sum_{q\in p\out} a_{qp}^- f_{q}(t) \, .
\end{equation}
Moreover, for all transition $q$, we require that
\begin{equation}
\min_{p\in q\inc, \, w_p(t)=0}\Big(
\dfrac{m_p(t)}{\tau_p} - \sum_{q'\in p\out} a^-_{q'p}f_{q'}(t)\Big)  =0 
\,.
\label{eq:earliest}
\end{equation}
In particular, this implies that at least one place $p \in q\inc$ verifies $w_p(t) = 0$. In this case, \eqref{eq:earliest} means that each of the upstream places $p$ that has a zero quantity of waiting tokens ($w_p(t)=0$)
must satisfy $\dot{w}_p(t)\geq 0$, and that at least one
of these places satisfies $\dot{w}_p(t)=0$. 
In other words, there is at least one 
\emph{bottleneck} upstream place $p$ of $q$, which has no waiting tokens 
and whose outgoing
flow $\sum_{q'\in p\out} a^-_{q'p}f_{q'}(t)$ coincides
with its processing flow ${m_p(t)}/{\tau_p}$.

The relation provided in~\eqref{eq:earliest} can be simplified in the case of conflict and synchronization patterns. In more detail, if $q$ has a unique upstream place $p$, and this place is free choice ({conflict}), then~\eqref{eq:earliest} reduces to:
\begin{equation}\label{eq:conflict}
\dfrac{m_p(t)}{\tau_p} - \sum_{q'\in p\out} a^-_{q'p}f_{q'}(t) =0 \,.
\end{equation}
Now, if $q$ has several upstream places, which are all free choice ({synchronization}), then \eqref{eq:earliest} reads as: 
\begin{equation}\label{eq:sync}
f_{q}(t) = \min_{p\in q\inc, \, w_p(t)=0} \dfrac{m_p(t)}{a^-_{qp}\tau_p} \, .
\end{equation}
This equation also holds if $|q\inc| = 1$ and if the upstream place of $q$ has a single output transition.

We respectively denote by $m(t)$, $w(t)$ and $f(t)$ the vectors of entries $m_p(t)$, $w_p(t)$ and $f_q(t)$. 

Albeit the dynamics that we presented so far is piecewise affine, a trajectory $t \mapsto (m(t), w(t), f(t))$ may be discontinuous. Indeed, in~\eqref{eq:sync}, the set of the places over which the minimum is taken may change over time. If at time $t$, there is a new place $p \in q\inc$ such that $w_p(t)$ cancels, and if the quantity ${m_p(t)}/(a^-_{qp}\tau_p)$ is sufficiently small, then the minimum in~\eqref{eq:sync} (and subsequently the flow $f_q(t)$) discontinuously jumps to the latter value. 

Initial conditions of the dynamics are specified by a pair $(m(t_i),w(t_i))$ such that the minimum in~\eqref{eq:earliest} makes sense, 
\ie, at least one $w_p(t_i)$ is equal to $0$ for each set of places $q\inc$. 
One can easily show that if the set $\{ p \in q\inc \colon w_p(t) = 0\} $ is nonempty for all transition $q \in \Qcal$ at time $t = t_i$, then it remains nonempty for all time $t \geq t_i$.

The dynamics~\eqref{eq:mp}--\eqref{eq:earliest} may admit different
trajectories for a given initial condition. These correspond
to different routings of tokens in
places with several output transitions. However, each of these
trajectories satisfies the conservation law:
\begin{equation} \label{eq:flow_conservation_eq}
  \dot{m}(t) + \dot{w}(t) = C f(t) \, ,
\end{equation}
where $C$ is the token flow matrix of the Petri net.
Recall that a \emph{P-invariant} of the Petri net refers to a nonnegative solution $y$ of the system $y\transpose C = 0$. In the discrete setting, a P-invariant corresponds to a weighting of places that is constant for any reachable marking, meaning that the quantity $y \transpose M$ is preserved under any firing of transition. An analogous statement holds in the continuous setting:
\begin{proposition}
Given a P-invariant $y$ of the Petri net, the quantity $y\transpose (m(t) + w(t))_{p \in \Pcal}$ is independent of~$t$.

In particular, if the entries of $y$ are all positive, then the Petri net is bounded, \ie, each function $t \mapsto M_p(t)$ is bounded.
\end{proposition}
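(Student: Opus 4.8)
The plan is to prove the invariance of $y\transpose(m(t)+w(t))$ directly from the conservation law \eqref{eq:flow_conservation_eq}, and then derive boundedness as an easy consequence. The key observation is that the proposition is essentially a one-line differentiation argument, so the challenge is more about handling the regularity of the trajectory than about any substantive algebraic difficulty.

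First I would introduce the function $\phi(t) := y\transpose(m(t)+w(t))$ and compute its derivative. Using linearity, $\dot\phi(t) = y\transpose(\dot m(t)+\dot w(t))$, and by the conservation law \eqref{eq:flow_conservation_eq} this equals $y\transpose C f(t)$. Since $y$ is a P-invariant, we have $y\transpose C = 0$ by definition, so $\dot\phi(t) = 0$ wherever the derivative exists. It then follows that $\phi$ is constant, which is exactly the first claim. For the second claim, suppose all entries of $y$ are positive, say $y_p \geq \epsilon > 0$ for all $p$. Since $M_p(t) = m_p(t) + w_p(t) \geq 0$ for every $p$ (markings are nonnegative quantities), we get $\epsilon\, M_p(t) \leq \sum_{p'\in\Pcal} y_{p'} M_{p'}(t) = \phi(t) = \phi(t_i)$, whence $M_p(t) \leq \phi(t_i)/\epsilon$ is bounded uniformly in $t$.

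The main obstacle, such as it is, lies in the fact that the trajectory $t\mapsto(m(t),w(t),f(t))$ need not be everywhere differentiable: as noted in the text after \eqref{eq:sync}, the flow $f_q(t)$ may jump discontinuously when the set of places achieving the minimum changes, and correspondingly $\dot m$ and $\dot w$ are defined only piecewise. The clean way to handle this is to observe that $m$ and $w$ themselves remain continuous (the jumps occur only in $f$, which is the right-hand side governing the \emph{derivatives}), so $\phi$ is continuous and piecewise continuously differentiable with $\dot\phi = 0$ on each interval where the policy is fixed. A continuous function with vanishing derivative off a discrete set of times is constant, giving the result on all of $[t_i,\infty)$. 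Alternatively, and perhaps more robustly, one integrates the conservation law: $m(t)+w(t) = m(t_i)+w(t_i) + \int_{t_i}^t C f(s)\,ds$, and left-multiplying by $y\transpose$ kills the integral term since $y\transpose C = 0$, yielding $\phi(t) = \phi(t_i)$ directly without any appeal to differentiability of $f$.
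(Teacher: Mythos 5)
Your proof is correct and takes essentially the same route as the paper's: multiply the conservation law \eqref{eq:flow_conservation_eq} by $y\transpose$, conclude that the derivative of $y\transpose(m(t)+w(t))$ vanishes, and obtain boundedness from the positivity of $y$ together with the nonnegativity of the markings (the paper states the bound as $\bigl(y\transpose(m(0)+w(0))\bigr)/y_p$, matching yours). Your extra discussion of non-differentiability of the trajectory, resolved via the integrated form of the conservation law, is a sensible refinement of a point the paper's one-line proof leaves implicit, but it is not a different argument.
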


\begin{proof}
The proof consists in multiplying both sides of~\eqref{eq:flow_conservation_eq} by the invariant $y$.
It follows that the derivative of $y\transpose (m(t) + w(t))_{p \in \Pcal}$ is zero.
If the entries of $y$ are all positive, then $\bigl(y\transpose(m(0) + w(0))_{p \in \Pcal}\bigr)/y_p$ is an upper bound to the marking of any place $p$.
\end{proof}

The following proposition collects several homogeneity properties of the continuous dynamics:

\begin{proposition} \label{thm:prop2}
  Let $(m(t),w(t),f(t))$ be a trajectory solution of the dynamics~\eqref{eq:mp}--\eqref{eq:earliest}, with the initial markings $(m_p(0))_{p \in \Pcal}$, and the holding times $(\tau_p)_{p \in \Pcal}$ and let $\alpha \in \Rsplus$, then:
  \begin{enumerate}[(i)]
  \item $(\alpha m(t), \alpha w(t), \alpha f(t))$ is a trajectory solution of the dynamics, associated with the initial markings $(\alpha m_p(0))_{p \in \Pcal}$.
  \item $(m(t/\alpha),w(t/\alpha),(1/\alpha)f(t/\alpha))$ is a trajectory solution of the dynamics, associated
  with the holding times $(\alpha \tau_p)_{p \in \Pcal}$ and the same initial conditions.
  \item let $x$ be a vector of the kernel of $C$, and $D = \text{diag}(\tau)$ be the $\Pcal \times \Pcal$ diagonal matrix such that $D_{pp} = \tau_p$, then $(m(t) + \alpha D C^+ x, w(t), f(t) + \alpha x)$ is a trajectory solution of the dynamics, associated with the initial markings $(m(0) + \alpha D C^+ x)$.
  \end{enumerate}
\end{proposition}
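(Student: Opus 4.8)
The plan is to treat (i)--(iii) as direct verifications: in each case I substitute the transformed triple into the three defining relations~\eqref{eq:mp}, \eqref{eq:wp} and~\eqref{eq:earliest} with the modified data (scaled markings, rescaled holding times, or shifted markings), confirm that they hold, and read off the marking at $t=0$. The relations~\eqref{eq:mp} and~\eqref{eq:wp} are differential equations that are affine in $(m,w,f)$, so the substitutions there amount to bookkeeping. The one relation requiring genuine care is~\eqref{eq:earliest}, since it is a minimum over the state-dependent index set $\{p\in q\inc : w_p(t)=0\}$; for each part I must check both that this index set is preserved by the transformation and that the minimized quantity transforms so that the value $0$ is retained.

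For (i), I would observe that each term of~\eqref{eq:mp} and~\eqref{eq:wp} is positively homogeneous of degree one in $(m,w,f)$, so multiplying the triple by $\alpha$ multiplies both sides of each equation by $\alpha$ and preserves them. In~\eqref{eq:earliest}, since $\alpha>0$ the condition $\alpha w_p(t)=0$ is equivalent to $w_p(t)=0$, so the index set is unchanged, and $\alpha\min(\cdots)=\min(\alpha\cdots)$, whence the minimum remains $0$. The initial marking becomes $(\alpha m_p(0))_{p\in\Pcal}$, as required.

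For (ii), I set $\tilde m(t)=m(t/\alpha)$, $\tilde w(t)=w(t/\alpha)$, $\tilde f(t)=(1/\alpha)f(t/\alpha)$ and $\tilde\tau_p=\alpha\tau_p$. By the chain rule $\dot{\tilde m}(t)=(1/\alpha)\dot m(t/\alpha)$, and likewise for $\tilde w$. Substituting into~\eqref{eq:mp}, the factor $1/\alpha$ coming from $\tilde f$ and the factor $\alpha$ in $\tilde\tau_p$ combine so that the right-hand side equals $1/\alpha$ times the right-hand side of~\eqref{eq:mp} evaluated at $t/\alpha$, which matches $\dot{\tilde m}(t)$; the same holds for~\eqref{eq:wp}. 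In~\eqref{eq:earliest} the index set evaluated at $t/\alpha$ is preserved and the minimized quantity is scaled by $1/\alpha>0$, so the minimum stays $0$. Since $t=0$ maps to $t/\alpha=0$, the initial conditions are unchanged.

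For (iii), I would write the perturbation as the \emph{constant} vector $\delta m_p=\alpha\tau_p(C^+x)_p$, together with $\delta f_q=\alpha x_q$ and $\delta w=0$. Because $\delta m$ does not depend on $t$, the derivatives $\dot{\tilde m}$ and $\dot{\tilde w}$ coincide with $\dot m$ and $\dot w$. In~\eqref{eq:mp} the contribution $\alpha\sum_q a_{qp}^+x_q=\alpha(C^+x)_p$ arising from $\delta f$ is exactly cancelled by the term $-\delta m_p/\tau_p=-\alpha(C^+x)_p$. In~\eqref{eq:wp} the added contribution is $\alpha(C^+x)_p-\alpha(C^-x)_p=\alpha(Cx)_p$, which vanishes since $x\in\ker C$; this is the single place where the kernel hypothesis is used. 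Finally, noticing that the quantity minimized in~\eqref{eq:earliest} is precisely the right-hand side of~\eqref{eq:wp}, hence equal to $\dot w_p$, its invariance follows from that of~\eqref{eq:wp}, and since $\tilde w=w$ the index set is unchanged, so the minimum remains $0$. The initial marking is $m(0)+\alpha DC^+x$, as stated. The main obstacle throughout is thus not the algebra but the careful handling of the constrained minimum~\eqref{eq:earliest}, and in (iii) the recognition that the kernel condition $Cx=0$ is exactly what neutralizes the shift in~\eqref{eq:wp}.
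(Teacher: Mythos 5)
Your proposal is correct and follows essentially the same route as the paper's own proof: direct substitution using the degree-one homogeneity of~\eqref{eq:mp}--\eqref{eq:earliest} for (i) and (ii), and for (iii) the observation that shifting $f$ by $\alpha x$ and $m_p/\tau_p$ by $\alpha(C^+x)_p$ leaves the right-hand sides of~\eqref{eq:mp}, \eqref{eq:wp} and the minimand of~\eqref{eq:earliest} unchanged, the latter two precisely because $(C^+-C^-)x=Cx=0$. Your treatment is simply more explicit than the paper's, notably in checking that the index set $\{p\in q\inc : w_p(t)=0\}$ is preserved in each case.
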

\begin{proof}
The first two statements derive easily from the homogeneity properties of Equations~\eqref{eq:mp}--\eqref{eq:earliest}.
For the third statement, one can note that adding $\alpha x_q$ to the $f_q(t)$ and adding $\alpha \sum_{q\in p\inc} a_{qp}^+ x_q$ to the $m_p(t)/\tau_p$ in~\eqref{eq:mp}--\eqref{eq:earliest} does not change the right hand sides of~\eqref{eq:mp} and~\eqref{eq:wp}, or the expression within the minimum in~\eqref{eq:earliest}.
For~\eqref{eq:wp} and~\eqref{eq:earliest}, this is due to the fact that $(C^+ - C^-) x = C x = 0$.
\end{proof}
We now complete the description of the continuous dynamics by additional equations which arise from the specification of routing rules. Such rules occur in the following two situations:
\begin{asparadesc}
\item[Conflict.] Given $p \in \Pconflict$, we suppose that tokens are routed according to a stationary distribution specified  by weights $\mu_{qp} > 0$ associated with each output transition $q$. Therefore,
%$\forall p \in \Pconflict$, $\sum_{q \in p\out} \mu_{qp} = 1$ and
\begin{equation}\label{eq:conflict_routing}
  \forall p \in \Pconflict, \; \forall q \in p\out, \quad a^-_{qp}f_q(t) = \mu_{qp} \dfrac{m_p(t)}{\tau_p} \, .
\end{equation}

\item[Priority.] Let $p \in \Ppriority$, and $q_+$ and $q_-$ be the two output transitions, as illustrated in \Cref{fig:configuration}(c). In order to specify that the flow is routed in priority to transition $q_+$, we require that:
\begin{align}
f_{q_+}(t) &= \min_{r \in q_+\inc, \, w_r(t) = 0} \dfrac{m_r(t)}{a^-_{q_+{r}}\tau_r} \,,
\label{eq:p1}\\
f_{q_-}(t) &= \begin{cases}
    \min_{r \in q_-\inc \setminus \{{p}\},  w_r(t) = 0} \dfrac{m_r(t)}{a^-_{q_-r}\tau_r} & \text{if}\  w_{{p}}(t) \neq 0 \, ,\\
\min \Bigl(\dfrac{m_{{p}}(t)}{a^-_{q{p}}\tau_{{p}}} - \dfrac{a^-_{q_+{p}}}{a^-_{q{p}}}f_{q_+}(t) \, , %\\[-1ex]
\min_{r \in q_-\inc \setminus \{{p}\}, w_r(t) = 0} \dfrac{m_r(t)}{a^-_{q_-r}\tau_r}
\Big) 
& \text{if}\  w_{{p}}(t) = 0 \, .
    \end{cases} 
\label{eq:p2}
\end{align}
\end{asparadesc}
The expression of $f_{q_-}(t)$ in~\eqref{eq:p2}, when $w_{p}=0$,
indicates that only the outgoing flow from ${p}$
that is not already consumed by the priority transition $q_+$ is
available to $q_-$.

The first two properties of homogeneity in Proposition~\ref{thm:prop2} are still satisfied by the dynamics extended by the 
routing rules~\eqref{eq:conflict_routing}--\eqref{eq:p2}.

\begin{remark}\label{rk:comparison}
We already mentioned in the introduction that our model differs from the standard continuous Petri net model in which transitions are equipped with firing rates, in the sense that in the latter model, the flows of the output transitions of a given place are pairwise independent.
To overcome this limitation, 
\emph{immediate transitions} have been introduced~\cite{recalde2006improving}. 
These transitions come with the specification of routing rules, for instance, in the case of conflict pattern. 
In this way, our model could be reduced to a classical continuous model enriched with immediate transitions.
In this reduction, we require timed transitions to have exactly one upstream place and one downstream place, 
so that all the routing is determined by immediate transitions, which inherit the equations defined in our place-timed dynamics.

Simply put, our model is the continuous analogue
of discrete Petri nets equipped with ``holding durations'', in which tokens are frozen during processing, whereas the usual continuous Petri net model can be seen as the continuous analogues of Petri nets with ``enabling durations'', in which transitions preempt tokens. We refer to~\cite{bowden2000brief} for a discussion on the meaning of time in Petri nets.
\end{remark}

\section{Policies and bottleneck places} \label{sec:policies}
The analysis of the piecewise affine dynamical system~\eqref{eq:mp}--\eqref{eq:earliest} leads
to introduce the notion of {\em policy}.
Fixing a policy allows one to solve the dynamics on a region where it is linear.
We shall see in \Cref{sec:4}
that policies also arise in the characterization
of stationary solutions. 

Even if our continuous dynamics holds for more general classes of Petri nets, 
we focus in the following on strongly connected, autonomous Petri nets, so that each transition has at least one upstream place.

We observe that the dynamics of Petri nets with free choice and priority routing~\eqref{eq:mp}--\eqref{eq:wp}, \eqref{eq:sync} and~\eqref{eq:conflict_routing}--\eqref{eq:p2}
is linear on each region where the arguments of the minimum operators do not change.
More precisely, at any time $t$, for any transition $q \in \Qcal$, there exists a place $p \in q\inc$ such that $w_p(t)=0$ and $p$ is the unique upstream place of $q$ or $p$ realizes the minimum in the expression \eqref{eq:sync}, \eqref{eq:p1} or \eqref{eq:p2} of $f_q(t)$.
Place $p$ is then referred to as the \emph{bottleneck place} of transition $q$ at time $t$.

We define a \emph{policy} $\pi$
as a function from $\Qcal$ to $\Pcal$, which maps any transition $q$ to one of its upstream places $p_{\pi}(q) \in q\inc$.
A policy is meant to indicate the bottleneck place of each transition $q$.
We denote by $S_{\pi}$ the {\em selection matrix} associated with~$\pi$, that is, the $\Qcal \times \Pcal$ matrix such that $(S_{\pi})_{qp} = 1$ if $p = p_{\pi}(q)$, and $0$ otherwise. In particular, $(S_{\pi})_{qp} = 1$ implies that $a_{qp} < 0$.

Note that, if $p$ realizes the minimum in one of the equations~\eqref{eq:sync}, \eqref{eq:p1} or~\eqref{eq:p2} for some transition, then $p$ also realizes the minimum in~\eqref{eq:earliest}.
The converse is not true if places are subject to priority.
For $p$ denoting a priority place and $q_+$ its priority output transition,
if $p$ realizes the minimum in~\eqref{eq:earliest} for transition $q_+$, then, $p$ does not necessarily realize the minimum in~\eqref{eq:p1}.
In other words, our definition of a bottleneck place is dependent on the routing rules of the net.

We point out that notions comparable to policies are used in~\cite{mahulea2006performance} in the context of continuous Petri nets with time attached to transitions.

\vspace{1em}
The dynamics of a Petri net can be expressed in terms of the different policies of the net:
at any time $t$, there is a policy $\pi^*$ (we can note $\pi^*(t)$ if we want to emphasize the dependence on time) such that
\begin{equation*}
  \forall q \in \Qcal \, , \quad w_{p_{\pi^*}(q)}(t) = 0 \,, %\label{eq:s_pi_wp}
\end{equation*}
and
\begin{equation*}
\begin{aligned}
  &\forall q \in \Qcal \text{ s.t.\ } p_{\pi^*}(q) \not\in (\Pconflict \cup \Ppriority) \,, \; \dfrac{m_{p_{\pi^*}(q)}(t)}{\tau_{p_{\pi^*}(q)}} = a^-_{qp_{\pi^*}(q)} f_q(t) \,,\\
  &\forall q \in \Qcal \text{ s.t.\ } p_{\pi^*}(q) \in \Pconflict \,, \quad \dfrac{m_{p_{\pi^*}(q)}(t)}{\tau_{p_{\pi^*}(q)}} = \dfrac{a^-_{qp_{\pi^*}(q)}}{\mu_{qp_{\pi^*}(q)}} f_q(t) \,,\\
  &\forall q_+ \in \Qcal \text{ s.t.\ } p_{\pi^*}(q_+) \in \Ppriority \,, \; \dfrac{m_{p_{\pi^*}(q_+)}(t)}{\tau_{p_{\pi^*}(q_+)}} = a^-_{q_+p_{\pi^*}(q_+)} f_{q_+}(t) \,,\\
  &\forall q_- \in \Qcal \text{ s.t.\ } p_{\pi^*}(q_-) \in \Ppriority \,, \; \dfrac{m_{p_{\pi^*}(q_-)}(t)}{\tau_{p_{\pi^*}(q_-)}}
   = a^-_{q_-p_{\pi^*}(q_-)} f_{q_-}(t)
   + a^-_{q_+p_{\pi^*}(q_+)} f_{q_+}(t) \,.
\end{aligned}
\end{equation*}

Now, for any policy $\pi$, we denote by $\Cpim$ the $\Qcal \times \Qcal$ matrix such that the right-hand side of this system of equations reads $\Cpim f(t)$
, where 
$f(t)$ is the vector of the $(f_q(t))_{q \in \Qcal}$.
In particular, the above system writes 
\[S_{\pi^*} \left(\dfrac{m(t)}{\tau}\right) = \Cpistar f(t) \,, \]
where $(m(t)/\tau)$ is the vector of the $(m_p(t)/\tau_p)_{p \in \Pcal}$.
The diagonal entries of $\Cpim$ are positive.
Moreover, if we order each transition of type $q_+$ before its associated transition $q_-$, the matrix $\Cpim$ becomes lower triangular.\footnote{We recall that, in our class of Petri nets, we assume that each transition has at most one upstream place subject to priority,
so that this re-ordering is valid.}
Hence, $\Cpim$ is invertible.
Matrix $\Cpim$ can be seen as a specification of the downstream token flow matrix of the Petri net $C^-$ (introduced in Section~\ref{sec:notations}), associated with the policy $\pi$.

With this notation, the continuous dynamics of Petri nets with free choice and priority routing reads:
\begin{subequations}\label{eq:matrix_dyn_system}
\begin{align}
  f(t) &= \inf_{\pi \text{ s.t. } S_{\pi} w (t) = 0} (\Cpim)^{-1} S_{\pi} \left( \dfrac{m(t)}{\tau} \right) \label{eq:matrix_dyn_f} \,,\\
  \dot{m}(t) &= C^+ f(t) - \dfrac{m(t)}{\tau} \label{eq:matrix_dyn_dot_m} \,,\\
  \dot{w}(t) &= \dfrac{m(t)}{\tau} - C^- f(t) \label{eq:matrix_dyn_dot_w} \,,
  \end{align} 
\end{subequations}%
where the infimum must be understood for the partial order over $\R^\Qcal$ induced by $\leq$. 
Note that there is at least one policy $\pi^*$ (depending on $t$) attaining the infimum.
It suffices to choose the policy $\pi^*$ introduced earlier
(\ie, to choose a policy that attains the minimum componentwise).

By choosing an upstream place for each transition, a policy actually defines a candidate ``bottleneck net'' of the Petri net, that is, a subnet with all the transitions of the original Petri net, and such that each transition has a unique upstream place.
On each of these subnets, the dynamics is linear and yields a unique trajectory for a given initial condition.
The trajectory is solved on a subnet of the original Petri net, but one can easily recover the solution over the whole Petri net.
This applies to the original dynamics of the system, on any time interval over which the infimum is reached by a constant policy,
as
stated in the following proposition.

\begin{proposition} \label{thm:fc_prio_unicity}
Suppose that there is a policy $\pi^*$ which reaches the infimum in~\eqref{eq:matrix_dyn_f} for all time $t$ in the interval $[t_i, t_f]$. Then the dynamics of the Petri net with free choice and priority routing reduces to a linear system, which admits a unique solution, given the initial conditions $(m(t_i), w(t_i))$.
\end{proposition}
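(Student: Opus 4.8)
The plan is to fix the policy $\pi^*$ on $[t_i, t_f]$ and show that the dynamics collapses to a linear ODE with constant coefficients, then invoke the standard existence-and-uniqueness theorem for linear systems. Since $\pi^*$ attains the infimum in~\eqref{eq:matrix_dyn_f} throughout the interval, the flow is given explicitly by $f(t) = (\Cpistar)^{-1} S_{\pi^*} (m(t)/\tau)$, where the inverse exists because $\Cpistar$ is lower triangular with positive diagonal, as established before the proposition. Substituting this expression for $f(t)$ into~\eqref{eq:matrix_dyn_dot_m} eliminates the flow and the infimum altogether, yielding a closed linear equation for $m$ alone:
\begin{equation*}
\dot{m}(t) = \Bigl( C^+ (\Cpistar)^{-1} S_{\pi^*} D^{-1} - D^{-1} \Bigr) m(t) \,,
\end{equation*}
where $D = \operatorname{diag}(\tau)$. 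This is an autonomous linear ODE in $m$ with a constant matrix, so by Cauchy--Lipschitz it has a unique solution for the initial datum $m(t_i)$.

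\textbf{Recovering the full trajectory.} Once $m(t)$ is determined, $f(t)$ is recovered by the explicit formula above, and $w(t)$ is obtained by integrating~\eqref{eq:matrix_dyn_dot_w} from the initial condition $w(t_i)$, namely
\begin{equation*}
w(t) = w(t_i) + \int_{t_i}^{t} \Bigl( \frac{m(s)}{\tau} - C^- f(s) \Bigr)\, ds \,.
\end{equation*}
Thus the triple $(m(t), w(t), f(t))$ is uniquely determined by $(m(t_i), w(t_i))$ on the whole interval. It remains to verify that this candidate is genuinely a solution of the original dynamics, i.e.\ that it satisfies not only~\eqref{eq:matrix_dyn_dot_m}--\eqref{eq:matrix_dyn_dot_w} but also the infimum relation~\eqref{eq:matrix_dyn_f}. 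Here one uses the hypothesis directly: because $\pi^*$ attains the infimum for every $t \in [t_i, t_f]$, the value $(\Cpistar)^{-1} S_{\pi^*} (m(t)/\tau)$ is precisely $\inf_\pi (\Cpim)^{-1} S_\pi (m(t)/\tau)$, so the linear solution we constructed does solve~\eqref{eq:matrix_dyn_f}. One must also check the admissibility constraint $S_{\pi^*} w(t) = 0$, which holds at $t_i$ by the initial condition and is consistent with the bottleneck interpretation of $\pi^*$.

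\textbf{The main obstacle} I anticipate is the constraint $w_{p_{\pi^*}(q)}(t) = 0$ for bottleneck places: the reduced linear ODE governs $m$ freely, but the waiting levels $w$ of the selected places must remain identically zero rather than merely vanishing pointwise, and this is what encodes that $\pi^*$ really is the bottleneck policy. The resolution is to observe that for a bottleneck place $p = p_{\pi^*}(q)$ one has $\dot{w}_p = m_p/\tau_p - \sum_{q' \in p\out} a^-_{q'p} f_{q'}$, and the defining equations of $\Cpistar$ guarantee that this expression is exactly the component of $S_{\pi^*}(m/\tau) - \Cpistar f$ that vanishes identically once $f = (\Cpistar)^{-1} S_{\pi^*}(m/\tau)$. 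Hence $w_p$ stays at $0$ automatically, consistently with $S_{\pi^*} w(t) = 0$, and no extra algebraic-differential constraint needs to be imposed by hand. With this consistency check in place, uniqueness for the linear system transfers to uniqueness for the original dynamics on $[t_i, t_f]$.
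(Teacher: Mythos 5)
Your core argument is correct and is essentially the paper's own proof in a slightly different parametrization: the paper eliminates $m$ and derives the constant-coefficient ODE $\dot{f}(t) = (\Cpistar)^{-1} D_{\pi^*}^{-1} ( S_{\pi^*} C^+ - \Cpistar ) f(t)$ with $f(t_i)$ determined from $m(t_i)$, whereas you eliminate $f$ and derive the equivalent closed ODE for $m$; in both cases Cauchy--Lipschitz gives uniqueness, and $w$ is then recovered by integrating~\eqref{eq:matrix_dyn_dot_w} from $w(t_i)$. That difference is purely cosmetic.

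However, the ``resolution'' of your final paragraph rests on a false identity. You claim that for a bottleneck place $p = p_{\pi^*}(q)$, the quantity $\dot{w}_p = m_p/\tau_p - \sum_{q' \in p\out} a^-_{q'p} f_{q'}$ is exactly a component of $S_{\pi^*}(m/\tau) - \Cpistar f$; in matrix terms, this amounts to $S_{\pi^*} C^- f = \Cpistar f$, which is false in general. It fails precisely at the priority patterns: if $p \in \Ppriority$ is selected as the bottleneck of its priority transition $q_+$, the row of $\Cpistar$ indexed by $q_+$ gives $(\Cpistar f)_{q_+} = a^-_{q_+p} f_{q_+}$ only, whereas $(S_{\pi^*} C^- f)_{q_+} = a^-_{q_+p} f_{q_+} + a^-_{q_-p} f_{q_-}$. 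Hence, along your candidate trajectory, $\dot{w}_p = -a^-_{q_-p} f_{q_-}$, which vanishes only if $f_{q_-} = 0$. That $f_{q_-} = 0$ does hold in this configuration is a consequence of the routing rule~\eqref{eq:p2} combined with nonnegativity of flows (when $p$ realizes the minimum in~\eqref{eq:p1} and $w_p = 0$, the term associated with $p$ in~\eqref{eq:p2} equals zero), not of ``the defining equations of $\Cpistar$''. (A similar caveat arises at conflict places, where the identity holds only because the weights satisfy $\sum_{q \in p\out} \mu_{qp} = 1$.) Fortunately, this verification is not needed for the proposition as stated: the hypothesis presupposes a trajectory of the dynamics along which $\pi^*$ attains the infimum, so in particular $S_{\pi^*} w(t) = 0$ holds along it, and existence of a solution of the linear system~\eqref{eq:const_pi_dyn_system} is automatic; only uniqueness has to be proved, which the first part of your argument (like the paper's proof) accomplishes. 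If you wish to keep the consistency check as a genuine existence argument, it must be carried out via the routing rules and nonnegativity as above, not via matrix algebra alone.
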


\begin{proof}
If $\pi^*$ reaches the infimum for any $t \in [t_i, t_f]$, then the continuous dynamics of the Petri net reads:
\begin{subequations}\label{eq:const_pi_dyn_system}
\begin{align}
  \Cpistar f(t) &= S_{\pi^*} \left(\dfrac{m(t)}{\tau} \right) \label{eq:const_pi_dyn_f} \,, \\
  \dot{m}(t) &= C^+ f(t) - \dfrac{m(t)}{\tau} \label{eq:const_pi_dyn_dot_m} \,, \\
  \dot{w}(t) &= \dfrac{m(t)}{\tau} - C^- f(t) \label{eq:const_pi_dyn_dot_w} \,, \\
  S_{\pi^*} w (t) &= 0  \,, \label{eq:const_pi_dyn_w}
  \end{align} 
\end{subequations}
which is a linear system.

We multiply~\eqref{eq:const_pi_dyn_dot_m} by $S_{\pi^*}$,
and replace the term $S_{\pi^*} (m(t)/\tau)$ by its expression given in~\eqref{eq:const_pi_dyn_f}.
This leads to:
\begin{equation*}
  S_{\pi^*} \dot{m}(t) = S_{\pi^*}C^+ f(t) - \Cpistar f(t) \,.
\end{equation*}

Let $D = \text{diag}(\tau)$ be the $\Pcal \times \Pcal$ diagonal matrix such that $D_{pp} = \tau_p$,
then $D_{\pi} \mathrel{:=} S_{\pi} D {S_{\pi}}\transpose$ is the $\Qcal \times \Qcal$ diagonal matrix such that $(D_{\pi})_{qq} = \tau_{p_{\pi}(q)}$.
Equation~\eqref{eq:const_pi_dyn_f} then writes $S_{\pi^*} m(t) = D_{\pi^*} \Cpistar f(t)$.
This leads to:
\begin{equation*}
  \dot{f}(t) = (\Cpistar)^{-1} D_{\pi^*}^{-1} \left( S_{\pi^*} C^+  -  \Cpistar \right) f(t) \, ,
\end{equation*}
which is an ordinary differential system.
Moreover, the $f(t_i)$ can be obtained from the $m(t_i)$ by~\eqref{eq:const_pi_dyn_f},
so that this system admits a unique solution $f$ for all $t \in [t_i, t_f]$.

Given this solution $f$, one can successively solve the differential system in ${m}$ given by~\eqref{eq:const_pi_dyn_dot_m} and the differential system in ${w}$ given by~\eqref{eq:const_pi_dyn_dot_w}, whose initial conditions are known,
so that the whole dynamics admits a unique trajectory.
\end{proof}

\section{Stationary solutions}\label{sec:4}
In this section, we prove that the stationary solutions of the continuous and discrete dynamics of a timed Petri net with free-choice and priority routing are the same. 
To do so, we first
recall in Section~\ref{sec:stationary_discrete} the formulation of the discrete dynamics and the associated stationary solutions given in~\cite{allamigeon2015performance}.

\subsection{Stationary solutions of the discrete dynamics} \label{sec:stationary_discrete}

The discrete dynamics of Petri nets with free choice and priority is expressed in terms of \emph{counter variables} associated with transitions and places.
Given a transition $q$, the counter variable $z_q: \Rplus \to \N$ denotes the number of firings of~$q$ that occurred up to time $t$ included. Similarly, the counter variable of place $p$ is a function $x_p:\Rplus \to \N$ which represents the number of tokens that have visited place $p$ up to time $t$ included (taking into account the initial marking). On top of being non-decreasing, the counter variables are \emph{\cadlag} functions, which means that they are right continuous and have a left limit at any time.

In this setting, the parameter $\tau_p$ associated with the place $p$ represents a minimal holding time. 
It is shown in~\cite{allamigeon2015performance} that, if tokens are supposed to be fired as early as possible, the counter variables satisfy the following equations (we generalize the equations to the case with valuations):
\begin{subequations}\label{eq:discrete}
\begin{align}
 \forall p \in \Pcal \,&, \quad x_p(t) =  M_p^0  + \sum_{q\in p\inc} a^+_{qp} z_q(t) \, , \label{eq:pnpriority1} \\
 \forall p \in \Pconflict \,&, \quad \sum_{q \in p\out} a^-_{qp} z_q(t) = x_p(t - \tau_p) \, ,\label{eq:pnpriority2} \\
 \forall q \in \Qsync \,&, \quad z_q(t) = \min_{p \in q\inc} x_p(t - \tau_p) / a^-_{qp} \, ,\label{eq:pnpriority3}
 \end{align}
\begin{align}
\forall & p \in \Ppriority \,, \nonumber\\
 &
 z_{q_+}(t) = \min\biggl( 
\Bigl(\dfrac{1}{a^-_{q_+p}}x_p(t-\tau_p) - \dfrac{a^-_{q_-p}}{a^-_{q_+p}} \lim_{s \uparrow t} z_{q_-}(s) \Bigr),% \\
\min_{{r \in q_+\inc, r \neq p}} \dfrac{1}{a^-_{q_+r}}x_r(t - \tau_r) \biggr)\, ,
 \label{eq:pnpriority4} \\
&
z_{q_-}(t) = \min \biggr(
\Bigl(\dfrac{1}{a^-_{q_-p}}x_p(t-\tau_p) - \dfrac{a^-_{q_+p}}{a^-_{q_-p}} z_{q_+}(t)\Bigr),
\min_{{r \in q_-\inc, r \neq p}}\dfrac{1}{a^-_{q_-r}} x_r(t - \tau_r) \biggr) \, ,
\label{eq:pnpriority5}
\end{align}
\end{subequations}
where $q_+$ ($q_-$) is the priority (non priority) output transition of $p \in \Ppriority$.

Note that if all the holding times $\tau_p$ are integer multiples of a fixed time $\delta$,  the left limit $\lim_{s \uparrow t}z_{q_-}(s)$ in~\eqref{eq:pnpriority4} can be replaced
by $z_{q_-}(t-\delta)$. This is helpful in particular to simulate
these equations.

In the setting of~\cite{allamigeon2015performance}, all conflicts
are solved by a stationary distribution routing.
The equivalent of the routing rule introduced to solve conflicts in the continuous setting is obtained here by 
allowing the tokens to be shared in fractions,
so that the counter functions take real values.
This corresponds to a \emph{fluid approximation}
of the discrete dynamics. In this setting, 
for each $p \in \Pconflict$ and $q\in p\out$,  we fix $\mu_{qp}>0$, giving
the proportion of the tokens routed from $p$ to $q$. We have:
\begin{equation}
\forall p \in \Pconflict \, , \, \forall q \in p\out \,, \quad z_q(t) = \dfrac{\mu_{qp}}{a^-_{qp}} x_p(t - \tau_p) \ . 
\label{eq:pnpriority6fluid}
\end{equation}

The stationary solutions of the discrete dynamics are defined as functions $x_p$ and $z_q$ satisfying the relations~\eqref{eq:discrete}--\eqref{eq:pnpriority6fluid} and which ultimately behave as affine functions, \ie, $x_p(t) =  u_p + t \rho_p$ and $z_q(t) = u_q + t \rho_q$ for all $t$ large enough. In this case, $\rho_p$ (resp.\ $\rho_q$) represents the asymptotic throughput of place $p$ (resp.\ transition $q$). We have shown in~\cite[Theorem~3]{allamigeon2015performance} that these stationary solutions are precisely given by following system (we generalize the equations to the case with valuations):

\begin{subequations}\label{eq:fixpoint_germ_rho}
\allowdisplaybreaks
\begin{align}
\forall p \in \Pcal \, , \qquad \rho_p& = \sum_{q \in p\inc} a^+_{qp} \rho_q \label{eq:fixpoint_germ1_rho} \,, \\
\forall p \in \Pconflict \, , \forall q \in p\out \, , \qquad \rho_q & = \mu_{qp} \rho_p / a^-_{qp}
\label{eq:fixpoint_germ2_rho} \,, \\
\forall q \in \Qsync \, , \qquad \rho_q & = \min_{p \in q\inc} \rho_p/a^-_{qp} \label{eq:fixpoint_germ3_rho}\,, \\
\forall p \in \Ppriority \, , \qquad
\rho_{q_+} &= \min_{r \in q_+\inc} \rho_r / a^-_{q_+r} \label{eq:fixpoint_germ4_rho}\,, \\
\forall p \in \Ppriority \, , \quad
\rho_{q_-} = \min \Big(
&{\big(\rho_p - a^-_{q_+p}\rho_{q_+}\big)}/{a^-_{q_-p} }, %\nonumber\\ 
\min_{r \in q_-\inc \setminus \{ p \}} {\rho_r}/{a^-_{q_-r}} \Big) \,,
\label{eq:fixpoint_germ5_rho} 
\end{align}
\end{subequations}

\begin{subequations}\label{eq:fixpoint_germ_u}
\allowdisplaybreaks
\begin{align}
\forall p \in \Pcal \, &, \quad u_p = M_p^0 + \sum_{q \in p\inc} a^+_{qp} u_q \label{eq:fixpoint_germ1_u} \,, \\
\forall p \in \Pconflict \, , \forall q \in p\out &, \; u_q = (\mu_{qp}/a^-_{qp}) (u_p - \rho_p \tau_p) 
\label{eq:fixpoint_germ2_u} \,,
\end{align}
\begin{equation}
\forall q \in \Qsync \, , \quad u_q  = \min_{p \in q\inc, \rho_q = \rho_p} (u_p - \rho_p \tau_p)/a^-_{qp} \label{eq:fixpoint_germ3_u} \,,
\end{equation}
\begin{align}
\forall p &\in \Ppriority \, , \nonumber\\ 
u_{q_+} & = \begin{dcases}
\min
\begin{multlined}[t]
\Big( (u_p - \rho_p \tau_p - a^-_{q_-p}u_{q_-})/a^-_{q_+p}, \\
\min_{r \in q_+\inc \setminus \{p\}, \, \rho_{q_+} = \rho_r} (u_r - \rho_r \tau_r)/a^-_{q_+r} \Big) \end{multlined}
\quad &\text{if } \rho_{q_-} = 0 \, ,
 \\
\min_{r \in q_+\inc \setminus \{p\}, \, \rho_{q_+} = \rho_r} (u_r - \rho_r \tau_r)/a^-_{q_+r} \quad &\text{otherwise,}
\end{dcases}\label{eq:fixpoint_germ4_u}
\\
u_{q_-} & = \begin{dcases}
\min
\begin{multlined}[t]
\Big( (u_p - \rho_p \tau_p - a^-_{q_+p} u_{q_+})/a^-_{q_-p}, \\
\min_{r \in q_-\inc \setminus \{p\}, \, \rho_{q_-} = \rho_r} (u_r - \rho_r \tau_r)/a^-_{q_-r} \Big)
\end{multlined}
\quad &\!\!\! \text{if } \rho_{q_-} + \rho_{q_+} = \rho_p \, ,
 \\
\min_{r \in q_-\inc \setminus \{p\}, \, \rho_{q_-} = \rho_r} (u_r - \rho_r \tau_r)/a^-_{q_-r} \quad &\text{otherwise.}
\end{dcases}\label{eq:fixpoint_germ5_u}
\end{align}
\end{subequations}

The above equations are expressed in a more compact form in~\cite{allamigeon2015performance}, using a semiring of germs of affine functions, which encodes lexicographic minimization operations.

\subsection{Stationary solutions of the continuous time dynamics} \label{sec:stationary_continuous}

In the continuous setting, we define a \emph{stationary solution} as a solution $(m,w,f)$ of the continuous dynamics such that for any place, ${m_p}$ is constant and ${w_p}$ is affine ($\dot{w_p}$ is constant). The following theorem provides a characterization of the stationary solutions.

\begin{theorem}\label{th:1}
A triple $(m,w,f)$ of vectors of resp.\ $|\Pcal|$,  $|\Pcal|$ and  $|\Qcal|$ functions from $\Rplus$ to $\Rplus$, with all the $m_p$ constant and all the $w_p$ affine, is a stationary solution of the continuous dynamics if and only if the following conditions hold:
\begin{subequations}
\begin{align}
  \dfrac{m}{\tau} &= C^+ f
  \, , \label{eq:mpcns} \\
  \dot{w} &= \dfrac{m}{\tau} - C^- f
  \, , \label{eq:wpcns} \\
  C f &\geq 0 \, ,\label{eq:stat_nodes_law} \\
\shortintertext{and there exists a policy $\pi^*$, such that}
\label{eq:wp_annuler}
\forall t \,, \quad S_{\pi^*} w(t) &= 0 \, , \\
\label{eq:right_annuler}
\left( S_{\pi^*} C^+  -  \Cpistar \right) f &= 0 \, .
\end{align}
\end{subequations}
\end{theorem}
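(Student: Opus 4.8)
The plan is to prove the equivalence by checking the \emph{original} defining equations of the dynamics, namely \eqref{eq:mp}, \eqref{eq:wp}, \eqref{eq:earliest} together with the routing rules (equivalently \eqref{eq:sync}, \eqref{eq:p1}, \eqref{eq:p2} and \eqref{eq:conflict_routing}), rather than arguing directly with the infimum \eqref{eq:matrix_dyn_f}; this circumvents the delicate comparison of candidate policies at priority places.

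For the necessity part, I would start from a stationary solution, so $m$ is constant and $w$ is affine. Since each $w_p$ maps $\Rplus$ into $\Rplus$ and is affine, its slope $\dot w_p$ is nonnegative, and for every $t>0$ the zero set $\{p : w_p(t)=0\}$ coincides with the fixed set $Z:=\{p : w_p(0)=0,\ \dot w_p=0\}$. Substituting this into \eqref{eq:sync}, \eqref{eq:p1} and \eqref{eq:p2}, whose index sets and whose coupling values $f_{q_+}$ become constant for $t>0$, shows that $f$ is constant for $t>0$, and I work with this constant value. Then \eqref{eq:mpcns} is \eqref{eq:mp} with $\dot m=0$, \eqref{eq:wpcns} is \eqref{eq:wp}, and substituting \eqref{eq:mpcns} into \eqref{eq:wpcns} gives $\dot w = Cf$, so $\dot w\ge 0$ yields \eqref{eq:stat_nodes_law}. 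I would take for $\pi^*$ the policy selecting, for each $q$, an upstream place attaining the minimum that defines $f_q$ in \eqref{eq:sync}, \eqref{eq:p1} or \eqref{eq:p2} (choosing $p_{\pi^*}(q_-)=p$ precisely when the ``leftover'' branch of \eqref{eq:p2} is active). Each such place lies in $Z$, giving \eqref{eq:wp_annuler}, and these minimizing choices turn the flow equations into the equalities $\Cpistar f = S_{\pi^*}(m/\tau)$; multiplying $\dot m = C^+ f - m/\tau=0$ by $S_{\pi^*}$ and substituting then produces \eqref{eq:right_annuler}.

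For the sufficiency part, I would assume the five conditions and verify each defining equation. Equation \eqref{eq:mp} is immediate from \eqref{eq:mpcns} with $\dot m=0$, and \eqref{eq:wp} is \eqref{eq:wpcns}; combining them gives $\dot w = Cf\ge 0$ by \eqref{eq:stat_nodes_law}, so for every place $p$,
\begin{equation*}
\sum_{q\in p\out} a^-_{qp} f_q = \frac{m_p}{\tau_p}-\dot w_p \le \frac{m_p}{\tau_p}\,.
\end{equation*}
Discarding the nonnegative flows of the remaining output transitions, this inequality yields, for every transition $q$ and every upstream place $r$ with $w_r=0$, the bound $f_q\le m_r/(a^-_{qr}\tau_r)$, and for the non-priority transition $q_-$ of a priority place $p$ with $w_p=0$ the bound $f_{q_-}\le (m_p/\tau_p - a^-_{q_+p} f_{q_+})/a^-_{q_-p}$. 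Hence $f_q$ is bounded above by the right-hand side of each of \eqref{eq:sync}, \eqref{eq:p1} and \eqref{eq:p2}. To obtain equality, I would use that \eqref{eq:mpcns} and \eqref{eq:right_annuler} together give $\Cpistar f = S_{\pi^*}(m/\tau)$: for $r=p_{\pi^*}(q)$, which satisfies $w_r=0$ by \eqref{eq:wp_annuler}, the corresponding row of this system makes the relevant bound an equality (and for a conflict place, where $p_{\pi^*}(q)$ is forced to be the unique upstream place, the same row is exactly \eqref{eq:conflict_routing}). Thus each minimum in \eqref{eq:sync}, \eqref{eq:p1}, \eqref{eq:p2} is attained at $p_{\pi^*}(q)$, the routing rules hold, and \eqref{eq:earliest} follows since $\dot w_{p_{\pi^*}(q)}=0$; the triple therefore solves the dynamics and is stationary because $m$ is constant and $w$ affine.

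The step I expect to be the main obstacle is the non-priority transition $q_-$ in \eqref{eq:p2}. Because its flow is coupled to $f_{q_+}$ through the leftover term, a purely componentwise comparison of the policy values $(\Cpim)^{-1}S_\pi(m/\tau)$ does not order them, and one cannot simply read the result off the infimum \eqref{eq:matrix_dyn_f}. The resolution I rely on is to argue at the level of the original equations, where the leftover term uses the \emph{true} value of $f_{q_+}$, and to ensure that $\pi^*$ selects the leftover branch for $q_-$ exactly when it is active; the lower-triangular structure of $\Cpistar$ (the priority transition ordered before its non-priority partner) guarantees that this choice is consistent. The remaining verifications are routine consequences of the displayed inequality and of the equalities $\Cpistar f = S_{\pi^*}(m/\tau)$.
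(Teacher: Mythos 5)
Your proposal is correct, and its core estimates are the same as the paper's; what genuinely differs is which formulation of the dynamics you verify. The paper proves the theorem against its policy reformulation \eqref{eq:matrix_dyn_f}: necessity essentially as you do it (affineness and nonnegativity of $w$ force $\dot w\ge 0$ and a time-independent zero set for $t>0$, hence a policy that is optimal at all times), and sufficiency by showing equality for $\pi^*$ together with $\Cpim f \le S_{\pi}(m/\tau)$ for every policy $\pi$ (its inequality \eqref{eq:to_prove}), via a four-case analysis that is in substance your list of bounds, since a policy is just a choice of upstream place per transition and $C^- f\le m/\tau$ is the paper's \eqref{eq:stationary_ineq_mp}. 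You instead verify the primitive equations \eqref{eq:sync}, \eqref{eq:p1}, \eqref{eq:p2}, \eqref{eq:conflict_routing} and \eqref{eq:earliest} directly, and your stated reason for doing so is sound: at a non-priority transition $q_-$, the matrix $(\Cpim)^{-1}$ has a negative entry in position $(q_-,q_+)$, so the policy values $(\Cpim)^{-1}S_\pi(m/\tau)$ need not dominate the true flow componentwise --- for instance, a policy selecting the priority place $p$ for both $q_+$ and $q_-$ has $q_-$-component equal to $0$ even when the true $f_{q_-}$ is positive --- so the componentwise-infimum reading of \eqref{eq:matrix_dyn_f} is delicate exactly where you say it is; the paper's proof sidesteps this by working throughout with the multiplied-through inequality $\Cpim f\le S_\pi(m/\tau)$, which is precisely what your place-by-place bounds amount to. What the paper's route buys is brevity, given the machinery of Section~\ref{sec:policies}; what yours buys is that the theorem then rests on the original defining equations rather than on the asserted equivalence with \eqref{eq:matrix_dyn_f}. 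Two details you should still write out to complete your outline: in the necessity part, $\pi^*$ must also be defined on the output transitions of conflict places (the choice is forced, $w_p=0$ there is guaranteed by \eqref{eq:earliest}, and the row equality is \eqref{eq:conflict_routing}); and in the sufficiency part, the claim $\dot w_{p_{\pi^*}(q)}=0$ needs a short case check --- for a conflict place it uses $\sum_{q\in p\out}\mu_{qp}=1$, and when $p_{\pi^*}(q_+)$ is the priority place itself it follows only by combining the row equality, which gives $\dot w_p\le 0$ and hence forces $f_{q_-}=0$, with $\dot w_p\ge 0$ from \eqref{eq:stat_nodes_law}.
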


Note that the existence of an $f \gneq 0$ that satisfies~\eqref{eq:stat_nodes_law}
provides a simple algebraic necessary condition to the existence of a stationary flow in a Petri net.
This corresponds to the net being {\em partially repetitive} (see~\cite{murata1989petri} for a definition).
\begin{proof}
Equations~\eqref{eq:mpcns} and~\eqref{eq:wpcns} are derived from~\eqref{eq:matrix_dyn_dot_m} and~\eqref{eq:matrix_dyn_dot_w}, with $\dot{m} =0$ for a stationary solution.

In a stationary solution, for any place $p$, $\dot{w}_p$ is constant, so that one cannot have $\dot{w}_p < 0$, 
otherwise this would yield $\lim_{t \rightarrow \infty} w_p(t) = -\infty$.
Therefore, by~\eqref{eq:wpcns}, $(m/\tau) \geq C^- f$, 
and by~\eqref{eq:mpcns}, we can replace $(m/\tau)$ by $C^+ f$, and get~\eqref{eq:stat_nodes_law}.

As the $\dot{w}$ are constant, if, for some place $p$ and at some time $t_0 > 0$, $w_p(t_0) = 0$, then $\dot{w}_p =0$,
(otherwise it would contradict $w_p(t) \geq 0$ for $0 \leq t < t_0$ or for $t > t_0$).
Hence, the set of places $p$ such that $w_p(t) = 0$ is independent of time for $t > 0$.

Moreover, the $m_p$ are constant, so that, if a policy $\pi$ attains the minimum in~\eqref{eq:matrix_dyn_f} at some time, then it attains the minimum at any time.
This means that, if $(m, w, f)$ is a solution of the continuous dynamics, then there exists a policy $\pi^*$ such that:
\begin{align*}
  \forall t \,, \quad S_{\pi^*} w(t) &= 0 \, , \\
  \Cpistar f &= S_{\pi^*} \left( \dfrac{m}{\tau} \right) \,.
\end{align*}
Now, by~\eqref{eq:mpcns} again, we can replace
$m/\tau$ by $C^+ f$ in the above equation, and we get Equations~\eqref{eq:wp_annuler} and~\eqref{eq:right_annuler}.

Conversely, suppose that a triple of functions $(m,w,f)$ satisfies the conditions of the theorem, with policy $\pi^*$. 
We prove that the the relations given in~\eqref{eq:matrix_dyn_system} describing the dynamics are satisfied.
First, \eqref{eq:matrix_dyn_dot_m} and~\eqref{eq:matrix_dyn_dot_w} are derived from~\eqref{eq:mpcns} and~\eqref{eq:wpcns}, with $\dot{m}_p = 0$.
We also note that, in Equations~\eqref{eq:stat_nodes_law} and~\eqref{eq:right_annuler},
replacing the term $C^+ f$ by $m/\tau$  (by~\eqref{eq:mpcns}) leads to the following equations:
\begin{align} 
  C^- f &\leq \dfrac{m}{\tau}  \label{eq:stationary_ineq_mp} \,, \\
  f &= (C_{\pi^*}^-)^{-1} S_{\pi^*} (\dfrac{m}{\tau})\,. \label{eq:right_annuler_mp}
\end{align} 

Equations~\eqref{eq:wp_annuler} and~\eqref{eq:right_annuler_mp} show that $\pi^*$ attains the equality in~\eqref{eq:matrix_dyn_f}.
Hence, in order to prove~\eqref{eq:matrix_dyn_f}, it is sufficient to prove that, for any $\pi$, we have 
\begin{equation}
\Cpim f \leq S_{\pi} \left(\dfrac{m}{\tau}\right)\,. \label{eq:to_prove}
\end{equation}
We prove this inequality row by row. 
Let $q$ be a transition.
We distinguish the following cases:
\begin{itemize}
  \item if $q \in \Qsync$, then $(C^- f)_{p_{\pi}(q)} = (\Cpim f)_q$ for any $\pi$ (for any choice of an upstream place of $q$) 
  so that~\eqref{eq:to_prove} follows from~\eqref{eq:stationary_ineq_mp}.
  \item if $q$ has a unique upstream place $p$, with $p \in \Pconflict$, then
  for any $\pi$, $p_{\pi}(q) = p_{\pi^*}(q)$ so that~\eqref{eq:to_prove} follows from~\eqref{eq:right_annuler_mp}.
  \item assume now that $q_+$ is the priority transition of a place $p$ subject to priority.
  Then, by~\eqref{eq:stationary_ineq_mp}, 
  $m_p/\tau_p \geq a^-_{q_+p} f_{q_+} + a^-_{q_-p} f_{q_-} \geq a^-_{q_+p} f_{q_+}$ and 
  for $r \in q_+\inc \setminus \{p\}$, ${m_r}/{\tau_r} \geq a^-_{{q_+}r} f_{q_+}$.
  Finally, for any $r \in q_+\inc$, $a^-_{q_+p} f_{q_+} \leq {m_r}/{\tau_r}$.
  This proves~\eqref{eq:to_prove}.
  \item let $q_-$ be the non priority transition of a place $p$ subject to priority.
  Then $(C^- f)_{p_{\pi}(q_-)} = (\Cpim f)_{q_-}$ for any policy $\pi$,
  so that~\eqref{eq:to_prove} follows from~\eqref{eq:stationary_ineq_mp}. \qedhere
\end{itemize}
\end{proof}

As a consequence of \Cref{th:1}, we obtain a correspondence between the
stationary solutions of the continuous dynamics and the stationary solutions of the discrete dynamics. 
In order to highlight the parallel between the discrete and the continuous setting, we denote by $f_p$ the processing flow $m_p/\tau_p$ for every place~$p$.

\begin{corollary}\label{coro:corresp}
  \begin{enumerate}[(i)]
    \item Suppose $(m,w,f)$ defines a stationary solution of the continuous dynamics.
    Then, for the initial marking $M_p^0= m_p$, 
    setting $\rho:=f$, $u_p:=M_p^0$, and $u_q:=0$ yields a stationary solution
    of the discrete dynamics. 
    \item Conversely, suppose $(\rho,u)$ is a stationary
    solution of the discrete dynamics. Then, defining $f:=\rho$,
    setting $m_p:=\rho_p\tau_p$ for every place $p$,
    and defining $w$ according to~\eqref{eq:wpcns} and \eqref{eq:wp_annuler}
    yields a stationary solution of the continuous dynamics.
    \end{enumerate}
\end{corollary}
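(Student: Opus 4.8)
The plan is to read both implications directly off \Cref{th:1}, using the dictionary $f \leftrightarrow \rho$, $\ m_p/\tau_p \leftrightarrow \rho_p$, $\ m_p \leftrightarrow u_p$, together with the elementary identity $u_p - \rho_p\tau_p = m_p - (m_p/\tau_p)\tau_p = 0$, which holds under both correspondences. In each direction the pivotal object is the bottleneck policy $\pi^*$: in (i) it is the policy supplied by \Cref{th:1}, while in (ii) it is a policy selecting, for every transition, a place attaining the minimum in the corresponding $\rho$-equation of~\eqref{eq:fixpoint_germ_rho}. Throughout, the routing weights are assumed normalized, $\sum_{q \in p\out}\mu_{qp} = 1$ at every conflict place. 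For convenience I would use the reformulations~\eqref{eq:stationary_ineq_mp} and~\eqref{eq:right_annuler_mp} of~\eqref{eq:stat_nodes_law} and~\eqref{eq:right_annuler} obtained via~\eqref{eq:mpcns}.

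For (i), I would first verify the throughput equations~\eqref{eq:fixpoint_germ_rho}. Equation~\eqref{eq:fixpoint_germ1_rho} is exactly~\eqref{eq:mpcns}; the conflict rule~\eqref{eq:fixpoint_germ2_rho} is encoded in $\Cpistar$ and recovered from~\eqref{eq:right_annuler_mp}; and the minimum equations~\eqref{eq:fixpoint_germ3_rho}--\eqref{eq:fixpoint_germ5_rho} follow exactly as in the proof of \Cref{th:1}, by pairing the inequalities~\eqref{eq:stationary_ineq_mp} (upper bounds at every upstream place) with the saturation equalities~\eqref{eq:right_annuler_mp} at the bottleneck place $p_{\pi^*}(q)$. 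I would then turn to the germ equations~\eqref{eq:fixpoint_germ_u}: since $u_p - \rho_p\tau_p = 0$ and $u_q = 0$, every term appearing in~\eqref{eq:fixpoint_germ1_u}--\eqref{eq:fixpoint_germ5_u} evaluates to $0$, so each identity $u_q = 0$ holds provided the index set of the relevant minimum is nonempty. Nonemptiness is automatic everywhere except in the second branches of~\eqref{eq:fixpoint_germ4_u} and~\eqref{eq:fixpoint_germ5_u}.

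For (ii), I would set $f := \rho$, $\ m_p := \rho_p\tau_p$, choose $\pi^*$ as above, and define $w$ affine by $\dot w = m/\tau - C^- f$ together with $w_p \equiv 0$ on the image of $\pi^*$. Then~\eqref{eq:mpcns} is~\eqref{eq:fixpoint_germ1_rho}; condition~\eqref{eq:stat_nodes_law} follows placewise from~\eqref{eq:fixpoint_germ2_rho}--\eqref{eq:fixpoint_germ5_rho} (with equality at conflict places, using $\sum\mu_{qp}=1$); and~\eqref{eq:right_annuler} is checked row by row according to the type of $p_{\pi^*}(q)$, using that $p_{\pi^*}(q)$ realizes the corresponding minimum. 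The remaining point is that the two defining requirements on $w$ are compatible, that is, $\dot w_p = 0$ whenever $p$ lies in the image of $\pi^*$, equivalently $\rho_p = \sum_{q'\in p\out} a^-_{q'p}\rho_{q'}$ at every bottleneck place. This is immediate for synchronization and conflict places; for priority places it is exactly where $\pi^*$ must be chosen carefully. Finally $\dot w = Cf \geq 0$ ensures $w \geq 0$, so $w$ is a genuine $\Rplus$-valued trajectory and \Cref{th:1} applies.

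The main obstacle, in both directions, is the non-priority transition $q_-$ of a priority place $p$, and the entire argument rests on one consistency fact: \emph{if $\rho_{q_-} > 0$, then the minimum defining $\rho_{q_+}$ in~\eqref{eq:fixpoint_germ4_rho} is attained strictly away from $p$}. Indeed, $\rho_{q_-} > 0$ forces the first argument of the minimum in~\eqref{eq:fixpoint_germ5_rho} to be positive, whence $\rho_{q_+} < \rho_p/a^-_{q_+p}$, so $p$ cannot realize the minimum for $q_+$. In direction (i) this supplies the index $r \neq p$ needed to make the second branches of~\eqref{eq:fixpoint_germ4_u}--\eqref{eq:fixpoint_germ5_u} nonempty; in direction (ii) it provides the freedom to steer $\pi^*(q_+)$ to a place $r \neq p$, which is precisely what keeps $\dot w_p = 0$ at the priority place $p$ when $\rho_{q_-} > 0$. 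Organizing the symmetric subcases of $q_-$ (according to whether $p$ or some $r \neq p$ is its bottleneck) is the only genuinely delicate bookkeeping; everything else reduces to the case analysis already carried out for \Cref{th:1}.
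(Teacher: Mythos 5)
Your proposal is correct and takes essentially the same route as the paper: both read the correspondence off \Cref{th:1}, via the observation that \eqref{eq:fixpoint_germ1_rho} is \eqref{eq:mpcns} and that \eqref{eq:fixpoint_germ2_rho}--\eqref{eq:fixpoint_germ5_rho} amount to $\rho_q = \min_{\pi} (\Cpim)^{-1} S_{\pi} \rho_p$, \ie, the same inequality-plus-saturation structure established in the proof of \Cref{th:1}. The paper dismisses the rest as ``straightforward''; the details you supply (vanishing of the germ equations \eqref{eq:fixpoint_germ_u} given $u_p - \rho_p\tau_p = 0$ and $u_q = 0$, and the consistency fact that $\rho_{q_-} > 0$ forces the bottleneck of $q_+$ away from the priority place, which settles both the nonemptiness of the second branches of \eqref{eq:fixpoint_germ4_u}--\eqref{eq:fixpoint_germ5_u} and the choice of $\pi^*$ compatible with $\dot{w}_p = 0$) are precisely the omitted bookkeeping, and they are carried out correctly.
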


\begin{proof}
Both statements are straightforward.
We point out that~\eqref{eq:fixpoint_germ1_rho} reads $\rho_p = C^+ \rho_q$ 
and that~\eqref{eq:fixpoint_germ2_rho}--\eqref{eq:fixpoint_germ5_rho} are equivalent to $\rho_q = \min_{\pi} (\Cpim)^{-1} S_{\pi} \rho_p$.
The same relationship between the $f_q$ and the $f_p$ was established in the proof of Theorem~\ref{th:1}.
\end{proof}

An important
problem is to relate the stationary flow to the initial marking.
On top of the relations given by the invariants of the Petri nets, most
results in this direction are limited to nets without priorities,
as they rely on monotoni\-city properties of the dynamics. The next theorem identifies, however, a somehow special situation in which such a relation persists
even in the presence of priority. This
applies in particular to the Petri net of the next section.

\begin{theorem}
If a trajectory of the continuous Petri net converges towards a
stationary solution $(m^{\infty},w^{\infty},f^{\infty})$, 
if for this trajectory, there exists a policy $\pi$ that reaches the infimum in~\eqref{eq:matrix_dyn_f} at any time,
and if $0$ is a semi-simple eigenvalue of $(S_{\pi} C^+ - \Cpim)$ associated with this policy, 
then $f^{\infty}$ is uniquely determined by the initial marking.
\end{theorem}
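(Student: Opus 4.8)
The plan is to exploit the linear reduction already furnished by Proposition~\ref{thm:fc_prio_unicity}. Since by hypothesis a single policy $\pi$ attains the infimum in~\eqref{eq:matrix_dyn_f} for all $t$, the flow obeys the autonomous linear system $\dot f(t) = B f(t)$ with $B := (\Cpim)^{-1} D_{\pi}^{-1}\bigl(S_{\pi} C^+ - \Cpim\bigr)$, which is exactly the generator derived in the proof of that proposition. Its solution is $f(t) = e^{B(t - t_i)} f(t_i)$, where the initial flow $f(t_i) = (\Cpim)^{-1} S_{\pi}\bigl(m(t_i)/\tau\bigr)$ is read off from the initial condition through~\eqref{eq:const_pi_dyn_f}. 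Thus everything reduces to showing that the limit $f^{\infty} = \lim_{t\to\infty} e^{B(t-t_i)} f(t_i)$ is a deterministic, indeed algebraic, function of $f(t_i)$, hence of the initial marking.

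First I would record two elementary facts about the limit. Since $f(t) \to f^{\infty}$ while $\dot f(t) = B f(t) \to B f^{\infty}$, convergence of $f$ forces $\dot f \to 0$, so $f^{\infty} \in \ker B$. On the other hand, for every $t$ one has $f(t) - f(t_i) = B \int_{t_i}^{t} f(s)\,ds \in \operatorname{im} B$; letting $t \to \infty$ and using that $\operatorname{im} B$ is closed gives $f^{\infty} - f(t_i) \in \operatorname{im} B$. Hence $f^{\infty}$ lies both in $\ker B$ and in the affine subspace $f(t_i) + \operatorname{im} B$.

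The role of semi-simplicity is to make these two constraints pin down $f^{\infty}$ uniquely. Semi-simplicity of the eigenvalue $0$ is precisely the statement that $\R^{\Qcal} = \ker B \oplus \operatorname{im} B$. Under this splitting, the intersection of $\ker B$ with $f(t_i) + \operatorname{im} B$ is the single point $P f(t_i)$, where $P$ is the spectral projector onto $\ker B$ along $\operatorname{im} B$. Therefore $f^{\infty} = P f(t_i)$, an expression depending only on $\pi$, the net data $(C^+, \Cpim, D_{\pi})$ and the initial flow; combined with $f(t_i) = (\Cpim)^{-1} S_{\pi}(m(t_i)/\tau)$ this exhibits $f^{\infty}$ as a fixed algebraic function of the initial marking, which is the claim.

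The one point requiring care, and the step I expect to be the main obstacle, is the transfer of the semi-simplicity hypothesis. It is stated for $S_{\pi} C^+ - \Cpim$, whereas the splitting I use is for the generator $B$, which differs from it by the invertible left factor $(\Cpim)^{-1} D_{\pi}^{-1}$. The kernels coincide, $\ker B = \ker(S_{\pi} C^+ - \Cpim)$, but semi-simplicity is a property of the whole generalized eigenstructure and is not automatically preserved under multiplication by an invertible matrix, so establishing $\ker B \oplus \operatorname{im} B = \R^{\Qcal}$ genuinely needs the structure of $\Cpim$ and $D_{\pi}$ (lower-triangularity with positive diagonal) or, alternatively, the convergence hypothesis itself: a convergent trajectory cannot carry a nontrivial Jordan component at $0$, since that would produce secular polynomial growth. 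I would carry out the argument either by showing the kernel-image splitting passes directly to $B$, or by replacing it with the always-available decomposition into the generalized $0$-eigenspace and its complementary invariant subspace, and then invoking convergence to confine the generalized component of $f(t_i)$ to $\ker B$; both routes yield $f^{\infty} = P f(t_i)$ with $P$ determined by $B$ alone.
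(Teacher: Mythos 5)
Your proposal is correct, and its main line is in fact the paper's own argument written in flow coordinates rather than marking coordinates; the genuinely different content lies in how you treat the semi-simplicity hypothesis. The paper does not solve the ODE: it observes that $S_{\pi}\dot{m}(t)=(S_{\pi}C^{+}-\Cpim)f(t)$ lies in the range of $M:=(S_{\pi}C^{+}-\Cpim)(D_{\pi}\Cpim)^{-1}$ for all $t$, so that $QS_{\pi}m(t)$ is conserved, where $Q$ projects onto $\ker M$ along $\operatorname{im} M$; evaluating at $t=0$ and $t=\infty$ and using $(S_{\pi}C^{+}-\Cpim)f^{\infty}=0$ gives $f^{\infty}=(\Cpim)^{-1}D_{\pi}^{-1}QS_{\pi}m(0)$. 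Setting $K:=D_{\pi}\Cpim$, your generator satisfies $B=K^{-1}MK$, your invariant $f(t)\in f(0)+\operatorname{im} B$ is exactly this conservation law transported by $K^{-1}$ (since $S_{\pi}m(t)=Kf(t)$), and your projector equals $P=K^{-1}QK$, so the two conclusions coincide. Where you go beyond the paper is precisely the obstacle you flag: the hypothesis concerns $A:=S_{\pi}C^{+}-\Cpim$, while the splitting is needed for $B=K^{-1}A$ (equivalently for the paper's $M=AK^{-1}$, which is similar to $B$ but not, in general, to $A$). The paper simply asserts this transfer (``the same property holds for the matrix \dots''), but it is not a general fact: for $A=\bigl(\begin{smallmatrix}0&1\\0&1\end{smallmatrix}\bigr)$, for which $0$ is semi-simple, and $K=\bigl(\begin{smallmatrix}1&0\\1&1\end{smallmatrix}\bigr)$, lower triangular with positive diagonal exactly like $D_{\pi}\Cpim$, one finds $AK^{-1}=\bigl(\begin{smallmatrix}-1&1\\-1&1\end{smallmatrix}\bigr)$, nilpotent and nonzero. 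So your caution identifies a real gap in the published proof, and your route (b) is a valid repair: splitting $\R^{\Qcal}$ into the generalized $0$-eigenspace of $B$ and its complementary $B$-invariant subspace, the component of $e^{Bt}f(0)$ in the former is polynomial in $t$, hence by convergence constant and in $\ker B$, while the limit of the other component lies in $\ker B$ intersected with that complement, hence vanishes. This repair even strengthens the theorem: given convergence and a constant policy, $f^{\infty}$ is the spectral projection (at eigenvalue $0$ of $B$) of $f(0)$, with no semi-simplicity assumption needed at all.
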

(Recall that the eigenvalue $\lambda$ of a matrix $B$ is said to be {\em semi-simple} if the dimension of its eigenspace is equal to its algebraic multiplicity, that is, to the multiplicity of $\lambda$ as the root of the characteristic polynomial of $B$. In particular, if $0$ is a semi-simple eigenvalue of $B$, then the kernel of $B$ and its range space are complementary subspaces.)

\begin{proof}
Under the conditions of the theorem, there exists a policy $\pi$ such that, for any $t$,
\begin{align}
  S_{\pi} \dot{m}(t) &= (S_{\pi}C^+ - \Cpim ) f(t) \,,  \label{eq:eq1} \\
  S_{\pi} m(t) &= D_{\pi} \Cpim f(t) \,,
\end{align}
as shown in the proof of Proposition~\ref{thm:fc_prio_unicity}.

Since $0$ is a semi-simple eigenvalue of $(S_{\pi}C^+ - \Cpim)$, the same property holds for the matrix $(S_{\pi}C^+ - \Cpim) (D_\pi \Cpim)^{-1} =  (S_{\pi}C^+ (\Cpim)^{-1} - I)D_{\pi}^{-1}$. Therefore, the kernel of this matrix and its range space are complementary subspaces. We denote by $Q$ the projection onto the former along the latter.

By~\eqref{eq:eq1}, we obtain that $Q S_{\pi} \dot{m}(t) = Q (S_{\pi}C^+ - \Cpim ) f(t) = 0$, so that $Q S_{\pi} {m}(t)$ is independent of time, and 
\[
  Q S_{\pi} {m}(0) = Q S_{\pi}m_{\infty} = Q D_{\pi} \Cpim f_{\infty} \,.
\]
Moreover, as $(m^{\infty},w^{\infty},f^{\infty})$ is a stationary solution of the continuous dynamics, Equation~\eqref{eq:right_annuler} holds and 
$D_{\pi} \Cpim f_{\infty}$ belongs to the kernel of $(S_{\pi}C^+ (\Cpim)^{-1} - I)D_{\pi}^{-1}$.
Therefore,
\[
  f_{\infty} = (\Cpim)^{-1} D_{\pi}^{-1} Q S_{\pi} {m}(0)\,. \qedhere
\]
\end{proof}

\section{Experimental results}
\label{sec:numerical experiments}
In this section, we illustrate our results on  the model
of an emergency call center with two treatment levels, introduced in~\cite{allamigeon2015performance}. In this simplified model of an emergency call center, emergency calls are handled by a first level of operators who dispatch them into three categories: extremely urgent, urgent and non urgent. Non-urgent calls (proportion $\mu_4$ of the calls) are entirely processed by level~1 operators. Extremely urgent ($\mu_2$) and urgent calls ($\mu_3$) are transfered to level~2 operators. Extremely urgent calls have priority over urgent calls (but cannot interrupt a talk between an operator of level~2 and an urgent call).

This emergency call center can be modeled by a Petri net with free choice and priority routing, as depicted in Figure~\ref{fig:call_center}. Place $p_3$ is a conflict place with a fluid stationary routing, with proportions $\mu_2$, $\mu_3$, $\mu_4$, representing the dispatching of calls
into the categories ``extremely urgent'', ``urgent'' and ``non urgent'' respectively.
Every arc has a valuation equal to one.
The initial marking $M_1^0$ (resp.\ $M_2^0$) of place $p_1$ ($p_2$) denotes the available number of operators of level~1 (level~2) in the call center.

It was observed in~\cite{allamigeon2015performance} that the discrete dynamics
has a pathological feature: when certain arithmetic relations between
the time delays are satisfied, the discrete time trajectory
may not converge to a stationary solution, and
its asymptotic throughput may differ from the throughput
of the stationary solution. It follows from our correspondence
result (Corollary~\ref{coro:corresp}) that the continuous dynamics
has the same stationary solutions. We shall observe that,
in this continuous setting, the trajectory converges towards a stationary solution,
so that the former pathology vanishes.

To compute the (fluid approximation) of the discrete dynamics, simulations have been performed in exact (rational)
arithmetics, using the GMP library~\cite{gmplib}. The throughput of
transitions $q_5$ and $q_6$ (see Figure~\ref{fig:call_center}), for the  discrete dynamics,
are compared in~\Cref{fig:phasesDiagramm} to the throughputs of the stationary solutions, computed by Equations~\eqref{eq:stat_nodes_law} and~\eqref{eq:right_annuler}.

\begin{figure}[t]
\begin{center}
\includegraphics{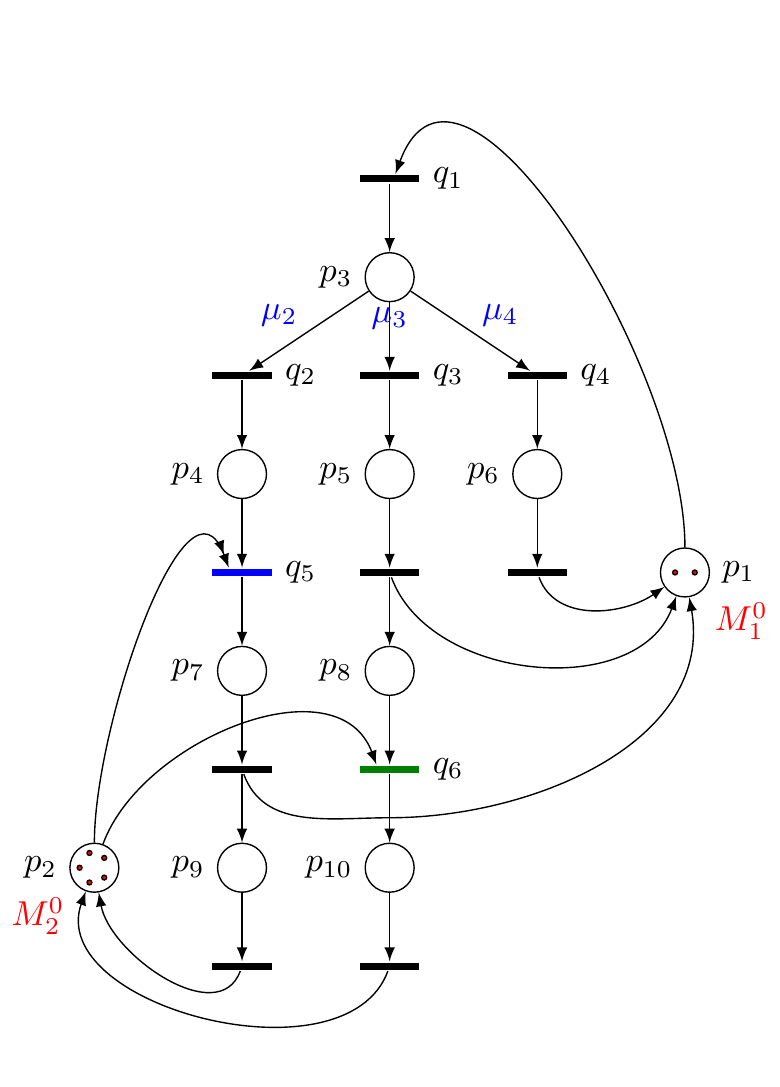}
\end{center}
\vspace{-2em}
\caption{Petri net of a simplified emergency call center. Place $p_2$ is subject to priority routing. The initial markings of the places different from $p_1$ and $p_2$ are null. The holding times $(\tau_1,\tau_2,\tau_3,\tau_4,\tau_5,\tau_6,\tau_7,\tau_8,\tau_9,\tau_{10})$ are the following: $(0.01, 0.01, 0.01, 4, 3, 3, 1, 0.01, 6, 7)$.} \label{fig:call_center}
\end{figure}

The dynamics expressed by~\eqref{eq:mp}--\eqref{eq:wp}, \eqref{eq:sync} and~\eqref{eq:conflict_routing}--\eqref{eq:p2} belongs to the class of hybrid automata \cite{henzinger2000theory}, which can handle piecewise linear but discontinuous dynamics like ours. We simulate our dynamics with the tool
SpaceEx~\cite{frehse2011spaceex}, which is a verification platform for hybrid systems. The particularity of SpaceEx is that it computes a sound over-approximation of the trajectories.

At the scale of Figure~\ref{fig:phasesDiagramm}, the lower and upper bounds to the values of the throughputs, computed by SpaceEx, coincide with the shape of the stationary throughputs curve.
Table~\ref{tab:1} compares the numerical values of these lower and upper bounds to the stationary throughputs for a few values of $M_2^0/M_1^0$. We observe that the over-approximation computed by SpaceEx provides an accurate estimate of the 
stationary throughput computed via Equations~\eqref{eq:stat_nodes_law}--\eqref{eq:right_annuler}.
This tends to show that the continuous dynamics converges towards the stationary throughputs, unlike the discrete dynamics.

Note that the experiments made with SpaceEx did not terminate for $M_2^0/M_1^0 = 0.6$: this seems to be related with the larger number of switches between the states of the automaton at this frontier between two different phases.

\begin{figure}[t]
\begin{center}
\includegraphics{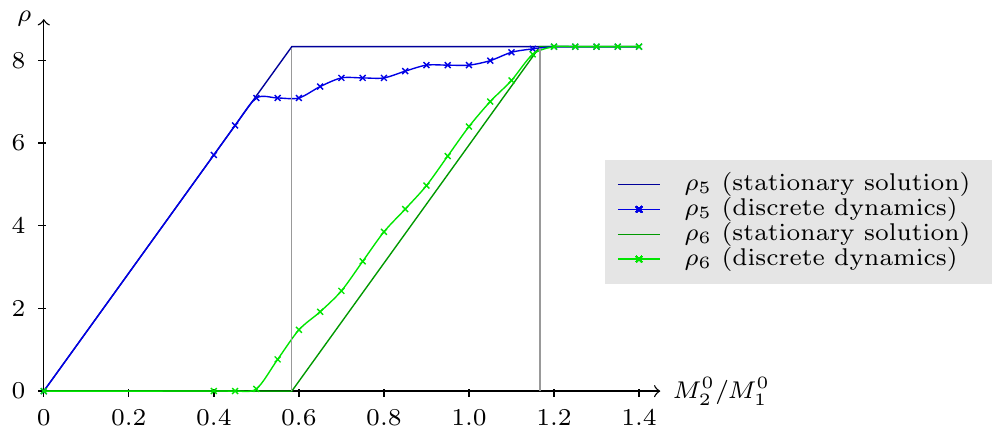}
\end{center}
\caption{Comparison of the throughputs of the discrete dynamics simulations with the theoretical throughputs (fluid model).
} \label{fig:phasesDiagramm} 
\end{figure}

\begin{table} 
\begin{center}
\begin{tabular}{r || r | r | r | r | r | r}
\toprule
$M_2^0/M_1^0$ & 0.2 & 0.4 & 0.6 & 0.8 & 1.0 & 1.2\\
\midrule
$\rho_5$ & 2.857 & 5.714 & 8.333 & 8.333 & 8.333 & 8.333 \\
$f_5^{\text{up}}$ & 2.865 & 5.716 & --- & 8.338 & 8.339 & 8.340 \\
$f_5^{\text{down}}$ & 2.849 & 5.707 & --- & 8.328 & 8.328 & 8.327 \\
\midrule
$\rho_6$ & 0 & 0 & 0.238 & 3.095 & 5.952 & 8.333 \\
$f_6^{\text{up}}$ & $< 0.001$ & $< 0.001$ & --- & 3.107 & 5.968 & 8.340 \\
$f_6^{\text{down}}$ & 0 & 0 & --- & 3.083 & 5.936 & 8.327 \\
\bottomrule
\end{tabular}
\end{center}
\caption{Lower and upper bounds of the throughputs of the continuous dynamics computed by SpaceEx, and comparison to the stationary throughputs} \label{tab:1}
\end{table}

\section{Conclusion}
We introduced a hybrid dynamical system model for continuous
Petri nets having both free choice and priority places,
and showed that there is a correspondence between
the stationary solutions of the continuous dynamics and 
the discrete one. An advantage of the continuous
setting is that some pathologies of
the discrete model (failure of convergence to a stationary
solution) may vanish. This is the case in particular
on a case study (emergency call center). We
leave it for further work to see under which generality
the convergence to the stationary solution can be established.

\subsection*{Acknowledgments}
An abridged version of the present work appeared in the Proceedings of
the conference Valuetools 2016. We thank the referees
for their detailed comments and for pointing out relevant references. 

\newcommand{\etalchar}[1]{$^{#1}$}


\begin{thebibliography}{MRTRS08}

\bibitem[ABG15]{allamigeon2015performance}
Xavier Allamigeon, Vianney B{\oe}uf, and St{\'e}phane Gaubert.
\newblock Performance evaluation of an emergency call center: tropical
  polynomial systems applied to timed {P}etri nets.
\newblock In {\em FORMATS}, pages 10--26. Springer, 2015.

\bibitem[BGM06]{bouillard2006extremal}
Anne Bouillard, Bruno Gaujal, and Jean Mairesse.
\newblock Extremal throughputs in free-choice nets.
\newblock {\em Discrete Event Dyn. Syst.}, 16(3):327--352, 2006.

\bibitem[Bow00]{bowden2000brief}
Fred~DJ Bowden.
\newblock A brief survey and synthesis of the roles of time in {P}etri nets.
\newblock {\em Math. and Comput. Model.}, 31(10):55--68, 2000.

\bibitem[CGQ95]{cohen1995asymptotic}
Guy Cohen, Stéphane Gaubert, and Jean-Pierre Quadrat.
\newblock Asymptotic throughput of continuous timed {P}etri nets.
\newblock In {\em CDC}, 1995.

\bibitem[DA87]{david1987continuous}
Ren{\'e} David and Hassane Alla.
\newblock Continuous {P}etri nets.
\newblock In {\em 8th European Workshop on Application and Theory of {P}etri
  nets}, volume 340, pages 275--294, 1987.

\bibitem[DA10]{david2010discrete}
Ren{\'e} David and Hassane Alla.
\newblock {\em Discrete, continuous, and hybrid {P}etri nets}.
\newblock Springer Science \& Business Media, 2010.

\bibitem[DN08]{darling2008diff}
R.W.R. Darling and J.R. Norris.
\newblock Differential equation approximations for {M}arkov chains.
\newblock {\em Probab. Surv.}, 5:37--79, 2008.

\bibitem[FGQ11]{farhi}
Nadir Farhi, Maurice Goursat, and Jean-Pierre Quadrat.
\newblock Piecewise linear concave dynamical systems appearing in the
  microscopic traffic modeling.
\newblock {\em Linear Algebra and Appl.}, pages 1711--1735, 2011.

\bibitem[FLGD{\etalchar{+}}11]{frehse2011spaceex}
Goran Frehse, Colas Le~Guernic, Alexandre Donz{\'e}, Scott Cotton, Rajarshi
  Ray, Olivier Lebeltel, Rodolfo Ripado, Antoine Girard, Thao Dang, and Oded
  Maler.
\newblock Space{E}x: Scalable verification of hybrid systems.
\newblock In {\em CAV}, pages 379--395. Springer, 2011.

\bibitem[GG04]{gaujal2004optimal}
Bruno Gaujal and Alessandro Giua.
\newblock Optimal stationary behavior for a class of timed continuous {P}etri
  nets.
\newblock {\em Automatica}, 40(9):1505--1516, 2004.

\bibitem[Gt16]{gmplib}
Torbjörn Granlund and {the GMP development team}.
\newblock {\em {GNU MP}: {T}he {GNU} {M}ultiple {P}recision {A}rithmetic
  {L}ibrary}, 6.1.1 edition, 2016.
\newblock \url{http://gmplib.org/}.

\bibitem[Hen00]{henzinger2000theory}
Thomas~A Henzinger.
\newblock The theory of hybrid automata.
\newblock In {\em Verification of Digital and Hybrid Systems}, pages 265--292.
  Springer, 2000.

\bibitem[MRS06]{mahulea2006performance}
Cristian Mahulea, Laura Recalde, and Manuel Silva.
\newblock On performance monotonicity and basic servers semantics of continuous
  {P}etri nets.
\newblock In {\em Discrete Event Syst., 2006 8th International Workshop on},
  pages 345--351. IEEE, 2006.

\bibitem[MRTRS08]{mahulea2008steady}
Cristian Mahulea, Antonio Ramirez-Trevino, Laura Recalde, and Manuel Silva.
\newblock Steady-state control reference and token conservation laws in
  continuous {P}etri net systems.
\newblock {\em IEEE Trans. on Autom. Sci. and Eng.}, 5(2):307--320, 2008.

\bibitem[Mur89]{murata1989petri}
Tadao Murata.
\newblock {P}etri nets: Properties, analysis and applications.
\newblock {\em Proceedings of the IEEE}, 77(4):541--580, 1989.

\bibitem[RMS06]{recalde2006improving}
Laura Recalde, Cristian Mahulea, and Manuel Silva.
\newblock Improving analysis and simulation of continuous petri nets.
\newblock In {\em 2006 IEEE CASE}, pages 9--14. IEEE, 2006.

\bibitem[RS00]{recalde2000pn}
Laura Recalde and Manuel Silva.
\newblock {PN} fluidification revisited: Semantics and steady state.
\newblock {\em J. Zaytoon S. Engell, Automation of Mixed Processes: Hybrid
  Dynamics Systems}, pages 279--286, 2000.

\bibitem[RTS99]{recalde1999autonomous}
Laura Recalde, Enrique Teruel, and Manuel Silva.
\newblock Autonomous continuous p/t systems.
\newblock In {\em {P}etri {N}ets}, pages 107--126. Springer, 1999.

\bibitem[SC87]{silva1987structural}
Manuel Silva and Jos\'e~Manuel Colom.
\newblock On the structural computation of synchronic invariants in p/t nets.
\newblock In {\em 8th European Workshop on Application and Theory of {P}etri
  nets}, volume 340, pages 237--258, 1987.

\bibitem[VMJS13]{vazquez2013introduction}
C~Renato V{\'a}zquez, Cristian Mahulea, Jorge J{\'u}lvez, and Manuel Silva.
\newblock Introduction to fluid {P}etri nets.
\newblock In {\em Control of Discrete-Event Systems}, pages 365--386. Springer,
  2013.

\end{thebibliography}
\end{document}